\newtheorem*{theorem*}{Theorem}
\newtheorem{theorem}{Theorem}
\newtheorem{prop}[theorem]{Proposition}
\newtheorem{lemma}[theorem]{Lemma}
\newtheorem{corollary}[theorem]{Corollary}
\newtheorem{conjecture}[theorem]{Conjecture}
\newtheorem{remark}[theorem]{Remark}
\newcommand{\be}{\begin{equation}}
\newcommand{\ee}{\end{equation}}
\newcommand{\R}{{\mathbb R}}
\newcommand{\M}{{\mathcal M}}
\newcommand{\C}{{\mathbb C}}
\newcommand{\T}{{\mathbb T}}
\newcommand{\G}{{\mathcal G}}
\newcommand{\SL}{\mathrm{SL}_2({\mathbb C})}
\newcommand{\SU}{\mathrm{SU}(2)}
\newcommand{\Tr}{\text{Tr}}
\newcommand{\old}[1]{}
\newcommand{\A}{\mathcal A}
\renewcommand{\P}{\mathbb{P}}
\newcommand{\E}{\mathbb{E}}
\newcommand{\B}{{\mathcal B}}
\newcommand{\W}{{\mathcal W}}
\newcommand{\eps}{{\varepsilon}}
\newcommand{\ga}{{\bf \Gamma}}
\newcommand{\SLE}{\mathrm{SLE}}
\begin{document}

\begin{frontmatter}

\title{Random curves on surfaces induced from the Laplacian determinant}
\runtitle{Random curves on surfaces induced from the Laplacian determinant}

\author{\fnms{Adrien} \snm{Kassel}\corref{}\ead[label=e1]{adrien.kassel@math.ethz.ch}\thanksref{t1}}
\and
\author{\fnms{Richard} \snm{Kenyon}\ead[label=e2]{rkenyon@math.brown.edu}\thanksref{t2}}

\thankstext{t1}{Partially supported by Fondation Sciences Math\'ematiques de Paris. Most of this work was completed while affiliated with ENS Paris.} 
\thankstext{t2}{Research supported by  NSF grant DMS-1208191 and the Simons Foundation.}

\runauthor{A. Kassel and R. Kenyon}


\address{Adrien Kassel\\ Departement Mathematik\\ETH\\R\"{a}mistrasse 101\\8092 Z\"{u}rich\\Switzerland\\ \printead{e1}}

\address{Richard Kenyon\\Mathematics Department\\Brown University\\151 Thayer St.\\Providence, Rhode Island 02912\\USA\\ \printead{e2}}

\begin{abstract}
We define natural probability measures
on finite multicurves (finite collections of 
pairwise disjoint simple closed curves) on curved surfaces. 
These measures arise as universal scaling limits of probability measures on cycle-rooted spanning forests (CRSFs) 
on graphs embedded on a surface with a Riemannian metric, in the limit as the mesh size tends to zero.
These in turn are defined from the Laplacian
determinant and depend on the choice of a unitary connection on the surface.  

Wilson's algorithm for generating spanning trees on a graph
generalizes to a cycle-popping algorithm for generating CRSFs for a general family of weights on the cycles.
We use this to sample the above measures.
The sampling algorithm, which relates these measures to the loop-erased random walk, is also used to prove 
tightness of the sequence of measures, a key step in the proof of their convergence.

We set the framework for the study of these probability measures and their scaling limits and state some of their properties.
\end{abstract}

\begin{keyword}[class=MSC]
\kwd{82B20}
\end{keyword}

\begin{keyword}
\kwd{Laplacian, cycle-rooted spanning forests, loop-erased random walk, scaling limit}
\end{keyword}

\end{frontmatter}


\tableofcontents

\section{Introduction}

Classical statistical mechanics deals with systems of large numbers of particles interacting through
local forces. These systems are naturally defined on Euclidean spaces, so that the notions of scaling limit and scale 
invariance make sense.
In this work we define a large family of statistical mechanical systems on curved spaces: 
curved surfaces with possibly nontrivial topology,
where the curvature and topology play both a local and global role in the underlying probability measure.
By scaling limit in such a context we mean that as the system size grows we shrink
the discretization parameter so that the metric properties
of the underlying surface remain constant. 

A fundamental property of our scaling limits is that they are \emph{universal}:
they are independent of the details of the discrete approximating sequence.
In other words they are natural, parameter-free systems on the curved surface itself, depending only on its
geometry and topology.  

Let us detail the systems we consider. A \emph{cycle-rooted spanning forest} (CRSF) on a graph~$\G$ is
a subgraph each of whose components contains a unique cycle, or equivalently,
contains as many vertices as edges, see Figure~\ref{crsf}. A \emph{cycle-rooted spanning tree} (CRST) is a connected CRSF. 

Natural probability measures on CRSFs
arising from the determinant of the graph Laplacian were introduced in \cite{Ke1}: the probability of a CRSF
is proportional to the product over its cycles
of a certain function of the cycle, depending on 
the holonomy of a discrete $\C^*$- or $\SL$-connection.
The interest of these measures is that they can give to a cycle a weight which is a function of its shape. Furthermore, these measures are determinantal viewed as point processes on the set of edges.

\begin{figure}[ht]\label{crsf}
\centering
\includegraphics[width=10cm]{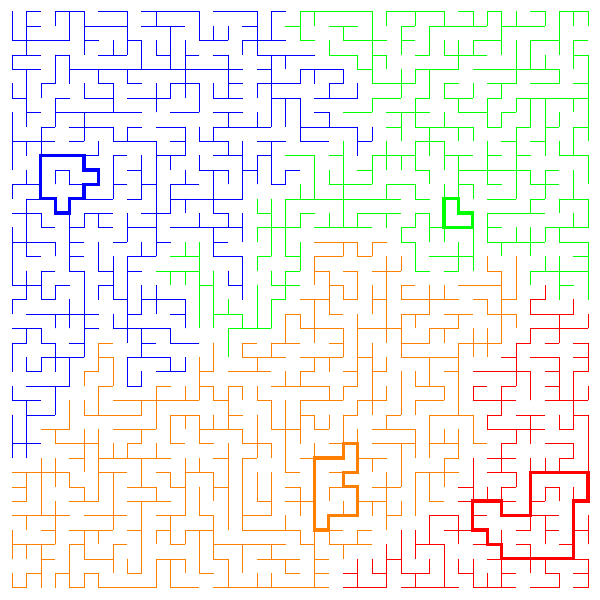}
\caption{A CRSF on the $40\times 40$ square grid; each connected component is in a different color and the cycles are in bold.}
\end{figure}

We study here the \emph{scaling limits} of these measures: their limits on a sequence of finer and finer
graphs approximating a surface with a fixed Riemannian metric.
By \emph{surface} we will mean here an oriented smooth surface with a Riemannian metric. By \emph{approximation} of a surface $\Sigma$ we will mean a sequence of graphs $(\G_n)$ geodesically embedded on $\Sigma$ and
\emph{conformally approximating} it in a sense defined below
(essentially, the simple random walk on $\G_n$ converges to Brownian motion on $\Sigma$). We endow the space of multiloops on $\Sigma$ with a natural topology and show the weak convergence of the above probability measures on multiloops on $\G_n.$

We may informally summarize our main statement as follows.
\begin{theorem*}[see Theorem~\ref{main}]
For any Riemannian surface, there is a natural probability measure on finite collections of disjoint simple closed curves drawn on it (which are fractal, locally resembling the Schramm-Loewner evolution $\SLE_2$). This measure is the universal scaling limit of natural discrete measures on CRSF loops defined on graphs conformally approximating the surface. When the surface has nontrivial topology and its metric is flat, these curves are noncontractible. In the case of the flat Euclidean disk, the probability measure is degenerate but a measure on single curves is obtained by a limiting procedure.
\end{theorem*}

We study scaling limits in two different settings: a topological setting and a geometrical setting.

In the topological setting, we consider only the conformal class of the metric on $\Sigma$.
Define a \emph{noncontractible} CRSF to be a CRSF with no contractible cycles.
Let $\left(\G_n\right)$ be an approximating sequence and $\mu^n_{nonc}$ be the uniform measure on noncontractible CRSFs of $\G_n$.
We show (Theorem~\ref{noncontractible}) that the cycle process~$\P_{nonc}^n$ of the 
$\mu^n_{nonc}$-random CRSF on $\G_n$
converges to a random loop process~$\P_{nonc}$ on $\Sigma$,
\emph{independent} of the approximating sequence $\G_n$. The limit only depends on the conformal class of $\Sigma$, in the following sense.
Let $z_1,\dots,z_k$ be distinct points of~$\Sigma$.
For any isotopy class of sets of pairwise disjoint simple loops 
$\{\gamma_1,\dots,\gamma_m\}$ of 
$\Sigma\setminus \{z_1,\dots,z_k\}$, the probability that
a random noncontractible CRSF on $\G_n$ has $m$ cycles, and these are 
isotopic to the $\gamma_i$, has a probability
converging as $n\to\infty$ to a limit independent of the approximating 
sequence $\G_n$. This refines the result of \cite{Ke2} who showed
that (for the dimer model, which is 
closely related to the CRSF model via Temperley's bijection \cite{KPW}) 
the distribution of the homotopy classes of the cycles in~$\pi_1(\Sigma)$ 
has a conformally invariant limit when~$\Sigma$ is a planar domain. A related (infinite) measure~$\mu_{-2}$ on simple loops on $\Sigma$ was recently
constructed in a very different manner by Benoist and Dubedat in \cite{BD}. The measure $\mu_{-2}$ restricted to noncontractible loops of the annulus (and normalized to be a probability measure) is the same as our measure~$\P_{nonc}$ conditioned to have one loop. When extended to all surfaces in such a way that a ``conformal restriction" property is satisfied, 
the measure $\mu_{-2}$ was conjectured to exist by Kontsevitch and Suhov. Further relations between our measures and $\mu_{-2}$ will be considered in a forthcoming paper.

In the geometrical setting, we take into account the metric on $\Sigma$, and in particular its curvature. 
Let $\left(\G_n\right)$ be a sequence of finite graphs conformally approximating $\Sigma$.
Associated to this data is a discrete connection on a complex 
line bundle over $\G_n$ arising from
the Levi-Civita connection on the tangent bundle on 
$\Sigma$. It is defined up to gauge equivalence by the property that the holonomy
around a loop is $e^{i\theta}$ where $\theta$ is the enclosed curvature.
From this connection $\Phi_{LC}^n$
we construct a natural probability measure $\mu^n_{LC}$ on CRSFs
on $\G_n$: each CRSF has a probability proportional to the product over its cycles of $2-2\cos\theta$, where $\theta$ is the curvature enclosed. The corresponding loop process $\P_{LC}^n$ is shown (Theorem~\ref{main}) to converge to a probability measure $\P_{LC}$ on multicurves on $\Sigma$, independent of the approximating sequence.

When the surface $\Sigma$ is contractible, we define another measure $\mu^n_{LC^0}$ which is in some sense
more natural. This measure is a limit  when $\varepsilon\to 0$ of the CRSF connection measures for the connection with curvature 
$e^{i \varepsilon \theta}$, a limit which was introduced in~\cite{Ke1}. 
This yields a measure on cycle-rooted spanning trees (CRSTs, that is, CRSFs with one component)
with weight proportional to $\theta^2$ where $\theta$ is the enclosed curvature. 
We show (Theorem~\ref{main}) that the loop measure $\P_{LC^0}^n$ converges to a measure $\P_{LC^0}$ on simple closed curves on~$\Sigma$, again independent of the approximating sequence.

We give a ``cycle-popping" algorithm (Theorem~\ref{algo}) for 
rapid exact sampling from the above measures (as well as more general measures), generalizing the well-known 
cycle-popping algorithm of Wilson \cite{Wi} for generating
uniform spanning trees. One simply runs Wilson's algorithm, and when a cycle
is created, flip a coin (with bias depending on the cycle weight) to decide whether to keep it or not.

We use this sampling algorithm to sample approximations of the above scaling limits.
In all three cases, the cycle-popping algorithm is an essential part of the convergence
argument: it is used to show tightness of the sequence of measures (Section~\ref{tightnesspart}).  

Another essential result shows that there are almost surely a finite number of components
in a random CRSF, and the scaling limits of the loops are nondegenerate, in the sense that they
do not shrink to points in the limit. This is accomplished by computing the universal limit of the probability of having no loops and by exploring in a Markovian way via the algorithm the surface with positive probability of creating macroscopic loops at each step. This implies a super-exponential tail for the number of loops which excludes the fact of having microscopic loops since otherwise their number would be infinite by the weak large scale dependence of the process.

We give samples from the measures $\mu_{LC}$ and $\mu_{LC^0}$ 
for the round sphere (Figure~\ref{LC1}), a saddle surface (Figure~\ref{LC3}, left) and a compact disk in the 
Poincar\'e plane (Figure~\ref{LC3}, right), and for the measure $\mu_{nonc}$ on a flat torus (Figure~\ref{inc1}) and planar domains (Figures~\ref{LaminationBranches}).
For $\mu_{LC}$ these are conditional samples,
conditioned on having only loops with area (curvature) bounded by $\pi/2$; our 
sampling algorithm does not work without this condition (see, however, \cite{HKPV} where it is shown how to sample from any determinantal process with Hermitian kernel, of which $\mu_{LC}$ is one).

The paper is organized as follows. In Section~\ref{algorithm} we introduce the sampling algorithm and prove its correctness. In Section~\ref{measures} we introduce the probability measures on CRSFs on graphs on surfaces and show how they are exactly sampled by the algorithm. In Section~\ref{scaling} we show that the probability measures on loops that these induce converge to probability measures on the space of multiloops of the surface (this section contains the proof of our main statement). Section~\ref{properties} enumerates some of the properties of the measures on loops considered in the paper. The paper concludes with a list of open questions in Section~\ref{questions}. 
\medskip

\noindent{\bf Acknowledgements.} We would like to thank Thierry L\'evy, David Wilson and Wei Wu for helpful discussions, and the referee for helpful suggestions.

\section{A general sampling algorithm}\label{algorithm}
 
An \emph{oriented} CRSF is a CRSF in which
each cycle has a chosen orientation. A measure on CRSFs induces a measure
on oriented CRSFs by giving each cycle an independent $1/2-1/2$-chosen orientation, and a measure on oriented CRSFs induces one on CRSFs by forgetting the orientation.

Let $\G=(V,E)$ be a finite graph with vertex and edge sets $V$ and $E$, respectively, and $c:E\to\R_{>0}$ a positive function on the edges
which we call the \emph{conductance}. 
Let $\alpha$ be a function which assigns to each oriented simple loop $\gamma$ in $\G$ a positive weight $\alpha(\gamma)\in[0,1]$. 
We allow loops $\gamma$ consisting of two edges (a backtrack: an edge which is immediately traversed in the reverse direction). These functions $c,\alpha$ define a probability measure $\mu=\mu_{c,\alpha}$ on oriented CRSFs, giving an oriented CRSF $\Gamma$ a probability proportional to 
$\prod_{e\in \Gamma}c(e)\prod_{\text{cycles }\gamma\subset\Gamma}\alpha(\gamma)$. We describe an algorithm to sample an oriented CRSF according to the measure $\mu$. 

We note that this sampling algorithm requires $\alpha\in[0,1]$; it will not
work without modification for larger $\alpha$. 
In the special case where $\alpha=1$ and $c=1$, the algorithm samples according to the uniform measure on oriented CRSFs.

Let us describe a cycle-popping procedure, named $P[w,\Gamma]$, which takes as arguments $w$ a vertex and 
$\Gamma$ an oriented subgraph of $\G$ not containing $w$, and outputs another oriented subgraph of $\G$ containing $\Gamma$ and $w$. The procedure is the following:
start at vertex $w$ and perform a simple random walk 
(with each step proportional to the conductances) until it first reaches a vertex $v$ which either belongs to $\Gamma$ or is the first self-intersection of its path; 
\begin{itemize}
\item If $v$ is in $\Gamma$, then replace $\Gamma$ by the union of $\Gamma$ and the oriented path just traced by the random walk.

\item If $v$ is the first self-intersection, let $\gamma$ denote the oriented cycle thus obtained, and sample a $\{0,1\}$-Bernoulli random variable with success probability $\alpha(\gamma)$; 
\begin{itemize}
\item If the outcome is $1$, then replace $\Gamma$ by the union of $\Gamma$ and the oriented path just traced by the random walk.
 
\item If the outcome is $0$, erase the cycle that was just closed and continue to perform the random walk from $v$ until it reaches $\Gamma$ or self-intersects, in which case repeat the above instructions.
\end{itemize}
\end{itemize}

The algorithm, called $\mathcal{A}$, is then the following: start with $\Gamma$ empty and $w$ an arbitrary vertex, and perform $P[w,\Gamma]$. 
If the output $\Gamma'$ is not a CRSF, take a new vertex $w'\not\in\Gamma'$ and perform $P[w',\Gamma']$, and so on 
until the output contains all vertices of~$\G$. Note that the output of $\A$ is an oriented CRSF and that we forget the information about the order of construction of the cycles.

\begin{theorem}\label{algo}
If $\alpha(\gamma)>0$ for some $\gamma$ then the algorithm $\mathcal{A}$ terminates and its output is an oriented CRSF, sampled according to the measure $\mu$. 
\end{theorem}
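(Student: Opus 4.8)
The plan is to adapt Wilson's stack (``cycle-popping'') argument to the weighted, keep-or-pop setting, separating the proof into three parts: that the output is an oriented CRSF, the abelian (order-independence) property, and the probability computation, with termination handled at the end. I would first introduce the stack representation. To each vertex $v$ attach an infinite stack of i.i.d.\ arrows, the $k$-th pointing to a neighbour $w$ with probability $c(vw)/c(v)$ where $c(v)=\sum_{w}c(vw)$; and to each \emph{colored cycle} $\sigma$ (an oriented simple cycle together with the stack level used at each vertex) attach an independent coin $B_\sigma$ that is ``keep'' with probability $\alpha(\gamma_\sigma)$ and ``pop'' otherwise, $\gamma_\sigma$ being the underlying loop. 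Reading the top arrow at each vertex gives a functional graph with out-degree one everywhere, so each component carries exactly one cycle and distinct cycles are vertex-disjoint. The moves are: when a cycle $\sigma$ appears on top, consult $B_\sigma$; if ``pop'', delete the top arrows of its vertices (exposing the next level); if ``keep'', freeze those vertices as the root cycle of a component. Each colored cycle is consulted at most once (popping destroys it, freezing makes it absorbing), so the coins may be pre-assigned, and a frozen cycle can never lie on a later cycle since its vertices point only within it. That the output is an oriented CRSF is then immediate: the surviving arrows form a functional graph whose cycles are exactly the frozen ones, one per component.

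The key step, and the main obstacle, is the abelian property: that the set of popped colored cycles, the set of frozen cycles, and hence the resulting CRSF form a deterministic function $\Phi(\mathrm{stacks},\mathrm{coins})$, independent of the processing order and of the starting vertex. I would prove this by a diamond/confluence lemma as in Wilson, using that any two cycles simultaneously present are vertex-disjoint, so ``pop'' and ``freeze'' moves on distinct cycles commute and the exposed arrows are deterministic, while freezing is absorbing and thus commutes with all later moves. The delicate point is to check that a cycle's fate (popped versus frozen) cannot depend on the order, i.e.\ that no admissible ordering can bypass a colored cycle that another ordering processes; this follows because a vertex's arrow changes only when it is itself popped, so the collection of colored cycles ever exposed is order-independent. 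I would then verify that the random walk with coin-flips in the procedure $P[w,\Gamma]$ realizes one admissible order — each loop the walk closes is a top cycle, kept or popped according to its coin — so by confluence the algorithm $\mathcal{A}$ outputs exactly $\Phi(\mathrm{stacks},\mathrm{coins})$.

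With $\Phi$ identified, the distribution is a direct computation. For a fixed oriented CRSF $\Gamma$, the event $\{\Phi=\Gamma\}$ means that for each vertex the stack is consumed through a ``soup'' $K$ of popped colored cycles (all with ``pop'' coins) and then survives on the arrow of $\Gamma$, while the cycles of $\Gamma$ receive ``keep'' coins; arrows and coins above the surviving level are unconstrained and sum to one. Grouping by $K$ factorizes the probability as
\begin{equation}
\P(\Phi=\Gamma)=\Big(\prod_{\gamma\subset\Gamma}\alpha(\gamma)\Big)\frac{\prod_{e\in\Gamma}c(e)}{\prod_v c(v)}\cdot W,\qquad W=\sum_{K}\ \prod_{\text{arrows of }K} \frac{c(\cdot)}{c(\cdot)}\;\prod_{\sigma\in K}\bigl(1-\alpha(\gamma_\sigma)\bigr),
\end{equation}
and the crucial point is that the soup weight $W$ is the same for every $\Gamma$: both the index set of poppable $K$ and each summand are intrinsic and do not reference $\Gamma$, since any poppable soup can be placed below any surviving configuration. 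Hence $\P(\Phi=\Gamma)$ is proportional to $\prod_{e\in\Gamma}c(e)\prod_{\gamma\subset\Gamma}\alpha(\gamma)$, which is exactly the weight defining $\mu$. Finally, for termination I would use that $\G$ is finite and connected: the walk is recurrent, hence closes a fixed loop $\gamma_0$ with $\alpha(\gamma_0)>0$ infinitely often, and by Borel--Cantelli keeps it almost surely, completing a cyclic component; every later call of $P[w,\Gamma]$ either keeps a cycle or reaches the nonempty current $\Gamma$ in finite time, and since each call adds at least one new vertex, only finitely many calls occur.
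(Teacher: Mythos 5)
Your proposal is correct and follows essentially the same route as the paper: Wilson-style stacks with pre-assigned keep/pop coins on colored cycles, order-independence of the popping, and the factorization $\P(\Gamma)=W\cdot\prod_{e\in\Gamma}\P(e)\prod_{\gamma\subset\Gamma}\alpha(\gamma)$ with the popped-soup weight $W$ independent of $\Gamma$. Your write-up is in fact somewhat more explicit than the paper's (notably the confluence lemma and the intrinsic-soup argument, and a termination argument via recurrence and Borel--Cantelli, where the paper is terser), but the underlying proof is the same.
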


\begin{remark}
Note that if in the above definition of $\mathcal{A}$ one starts with $\Gamma$ equal to $S$, a distinguished set of vertices in $\G$, then the algorithm samples an oriented \emph{essential CRSF}, see definition in Section \ref{wiredbdry} below.
\end{remark}
 
\begin{proof}
Following the proof of Wilson's algorithm~\cite{Wi}, we construct an equivalent description, denoted $\A'$, of the algorithm $\A$.

Let us define $\A'$ in the following way. Consider, over each vertex $v\in V$, an infinite sequence $X^{(v)}$
of i.i.d random
variables $X^{(v)}=(X_1^{(v)},X_2^{(v)},\dots)$, 
each distributed as a random neighbor of $v$ according to the conductance measure, that is, for each $i\geq 1$ and each neighbor $w$ of $v$, we have
$$\P\left(X_i^{(v)}=w\right)=\frac{c(vw)}{\sum_{v'\sim v}c(vv')}\,.$$
We represent $X^{(v)}$ as an infinite stack of cards, with only $X_1^{(v)}$ 
being visible at the top of the stack. 

We draw an edge from each vertex $v$
to the neighbor shown on the top of the stack $X^{(v)}$;
the oriented graph thus seen is an oriented CRSF (with possible loops of length $2$). This is our initial CRSF. We now describe a step by step random popping algorithm of the cycles. Note that at each step, the graph that we see remains an oriented CRSF. Here is the algorithm:
For each cycle $\gamma$ encountered in the current CRSF, pop it with probability proportional to $1-\alpha(\gamma)$; when a cycle
is popped off, the top card on the stacks for each of its vertices is discarded. 
When a cycle is ``kept", its cards are fixed and can no longer be removed.
The algorithm stops once the cycles that remain have been all ``kept" in a Bernoulli trial. It is easy to see that the order in which the cycles are popped is not relevant. 

Note that this algorithm terminates since there is at least one
cycle $\gamma$ with positive weight $\alpha(\gamma)$, because eventually, with probability $1$, 
all the cycles present at one step will have been previously kept (the argument is the same as in Wilson's proof). 

Since the cards of the stacks are distributed as the steps of a conductance-biased random walk, we see that algorithm $\A'$ has the same output in distribution as algorithm $\A$. In order to compute the output distribution of $\A$ we will therefore use algorithm $\A'$.

Let us compute the probability that a given oriented CRSF $\Gamma$ is obtained as an output of algorithm~$\A'$.
Let $\gamma_1,\ldots, \gamma_k$ be the cycles of~$\Gamma$. The CRSF $\Gamma$ is obtained as an output if and only if there exists a finite sequence of oriented cycles $C_1,\cdots,C_m$ such that these cycles are popped, 
and after removing them, the cards that appear correspond to $\Gamma,$ and 
there are $k$ successful trials for Bernoullis with success probability $\alpha(\gamma_i)$. 

By independence of the cards in the stacks, the last CRSF considered 
is independent of the cycles that were popped.
Therefore, for any oriented CRSF $\Gamma$, we have 
\begin{eqnarray*}
\P(\Gamma) &=& \sum_{{\mathcal C}=\{C_1,\dots,C_m\}}\P\left(\Gamma~|~\text{pop }{\mathcal C}\right)\P\left(\text{pop }{\mathcal C}\right)\\
&=&\sum_{\mathcal C}\P\left(\text{pop }{\mathcal C},\text{ and $\Gamma$ occurs underneath and is kept}\right)\\
&=&\sum_{\mathcal C}\P\left(\text{pop }{\mathcal C}\right)\prod_{e\in\Gamma}\P(e)\prod_{i=1}^k\alpha(\gamma_i)\\
&=&\left(\sum_{\mathcal C}\P\left(\text{pop }{\mathcal C}\right)\right)\prod_{e\in\Gamma}\P(e)\prod_{i=1}^k\alpha(\gamma_i)
\end{eqnarray*}
which we see is proportional to the weight of $\Gamma$. 
\end{proof}

To sample a non-oriented CRSF according to a measure which assigns a CRSF~$\Gamma$ a weight proportional to $\prod_{e\in\Gamma}c(e)\prod_{\gamma\subset\Gamma}\alpha(\gamma)$, where the product is over non-oriented cycles~$\gamma$, and $\alpha$ is a function invariant under orientation, it suffices to have $\alpha\in[0,2]$, perform algorithm $\mathcal{A}$ for the measure $\mu_{c, \alpha/2}$, and forget the orientation in the resulting oriented CRSF. In particular, we obtain the uniform measure on non-oriented CRSFs with the choice $c=\alpha=1$.

There is a variant of the previous algorithm to sample an oriented CRSF according to measure $\mu_{c,\alpha}$ conditional on having a single loop: multiply all the loop weights by a small constant $\eps$. 
Then perform $\mathcal{A}$; if $\eps$ is small there will typically be a single loop (if not, start over).  

Let $N$ be the total number of vertices of the graph. The running time of the algorithm is bounded by the time to obtain the first loop 
(which is bounded by $O(N^2)$ if $\alpha\geq O(1/N^2)$) plus the running time of Wilson's algorithm, that is $O(N(\log N)^2)$ (Wilson's algorithm has a running time bounded by the cover time~\cite{Wi} which is linear up to a logarithmic correction~\cite{Al}). The running time is at least linear. Extreme cases correspond to extreme values of $\alpha$: for $\alpha=1$ (uniform measure on CRSFs), the running time is linear; for $\alpha$ close to zero (like in the conditional measure described in the previous paragraph), the running time is large, at least $O(1/\sup \alpha)$. 

\section{Natural probability measures on CRSFs}\label{measures}

The most natural probability measure on CRSFs on a finite unweighted graph is the uniform measure. 
If the edges 
are weighted with a real positive conductance function then in this setting it is natural to give a CRSF a probability
proportional to the product of its edge weights. We call this the \emph{background measure}. 
There are however other natural probability measures that can be constructed from connections on bundles and that are meaningful for graphs embedded in surfaces. 

\subsection{Connections}

Let $\G=(V,E)$ be a finite graph. 
A \emph{vector bundle on $\G$} is a copy $W_v$
of some fixed complex vector space $W$ associated to each vertex $v\in V$. The \emph{total space} of the bundle is the direct sum
$\W=\bigoplus_{v\in V}W_v.$
A \emph{unitary connection} $\Phi$
on $\W$ is the data consisting of, for each oriented edge $e=vv'$, 
a unitary complex linear map 
$\varphi_{vv'}:W_v\to W_{v'}$ such that $\varphi_{v'v}=\varphi^{-1}_{vv'}$.  The map
 $\varphi_{vv'}$ is referred to as the
\emph{parallel transport} from $v$ to $v'$. We say that 
two connections $\Phi,\Phi'$ are \emph{gauge equivalent} if there exist 
unitary $\psi_v:W_v\to W_v$ such that $\psi_{v'}\varphi_{vv'}=\varphi'_{vv'}\psi_{v}$,
that is, $\Phi'$ is obtained from $\Phi$ by changing the basis of each space $W_v$ by a unitary transformation. 
In this paper we deal uniquely with vector bundles with $W=\C$ (line bundles) or $\C^2$,
and $U(1)$- or $\SU$-connections respectively.

Let $c:E\to \R_{>0}$ be a conductance function.
We let $\Delta_\Phi$ be the associated Laplacian
acting on $f\in\W$ defined, for each vertex $v$, by
$$\Delta_\Phi(f)(v) = \sum_{v'\sim v}c(vv')\left(f(v)-\varphi_{v'v}f(v')\right)\,,$$
where the sum is over all neighbors $v'$ of $v$.

When $\G$ is geodesically embedded in a surface $\Sigma$ with a Riemannian metric,
(that is, embedded in such a way that edges are geodesic segments),
there is a natural connection 
$\Phi=\Phi_{\nabla}$ on $\G$ arising from any unitary connection
$\nabla$ on a vector bundle on $\Sigma$: 
we define for each vertex $v$ the space $W_v$ to be the fiber over $v$;
the $\nabla$-parallel transports along edges~$e$ of $\G$ define the parallel transports $\varphi_e$ 
and thus the connection $\Phi$.

The product of parallel transports along a closed path is called the \emph{holonomy} of the connection along the path.
For flat connections it is also called the \emph{monodromy}.

\subsection{Laplacian determinant and measures}
\begin{theorem}[\cite{Fo, Ke1}]\label{detDelta} For a graph with unitary connection $\Phi$ on a line bundle
we have
\be\label{detform}
\det(\Delta_\Phi)=\sum_{\mathrm{CRSFs}}\prod_{\text{edges}}c(e)\prod_{\text{cycles}}(2-2\cos\theta)\,,
\ee
where
$e^{i\theta}$ is the holonomy of the connection around the cycle, for any choice of its orientation. 
\end{theorem}

Associated to $\Phi$ is a probability measure $\mu_{\Phi}$ on CRSFs,
where the probability of a CRSF is proportional to 
$\prod_{\text{edges}}c(e)\prod_{\text{cycles}}(2-2\cos\theta)$. 
This measure exists as long as there is at least one cycle with $\theta\ne 0 \mod 2\pi$.

See Theorem \ref{qdet} below for a generalization to $\C^2$-bundles with $\SU$-connection, where the weight $2-2\cos\theta$ is replaced
by $2-\Tr \,w$, with $w$ denoting the holonomy of the connection around the cycle.
Note that for an element $w\in\SU$ we have $2-\Tr\,w=2-2\cos\theta$ where $e^{\pm i\theta}$ are
the eigenvalues of $w$. 
One can treat a line bundle connection with parallel transports
$\varphi_{e}=e^{i\theta_{e}}$ as a special case of a $\SU$-connection with parallel
transports which are diagonal matrices 
$$\begin{pmatrix}e^{i\theta_{e}}&0\\0&e^{-i\theta_{e}}\end{pmatrix}.$$
The measures on $\SU$-connections are used to analyze
the measures of primary interest $\mu_{nonc}, \mu_{LC}$ and $\mu_{LC^0}$ we discuss below.

\subsection{Graphs with wired boundary}\label{wiredbdry}

Let $B\subset\G$ be a subset of vertices which we consider to be the \emph{wired boundary} of $\G$
or simply \emph{boundary}.
An \emph{essential CRSF} on a graph with wired boundary is a subgraph, each of whose components is either a unicycle
not containing any boundary vertex, or a tree containing a single boundary vertex. 
For a graph with connection and boundary we define
$\Delta_\Phi$ to be the associated Laplacian
acting on $f\in W$ defined, for each vertex $v\in\G\setminus B$, by
$$\Delta_\Phi(f)(v) = \sum_{v'\sim v}c(vv')\left(f(v)-\varphi_{v'v}f(v')\right)\,,$$
where the sum is over all neighbors $v'\in\G$ of $v$ (including neighbors in $B$).
In the natural basis $\Delta_{\Phi}$ is a submatrix (indexed by $\G\setminus B$) of the full laplacian on $\G$.
The analog of Theorem \ref{detDelta} above holds (see \cite{Ke1}) where the sum is over essential CRSFs.

\subsection{Flat connections}

A connection is \emph{flat} if it has trivial holonomy around any contractible cycle.
Suppose that $\G$ is geodesically embedded on a non-simply connected surface $\Sigma$ 
with flat connection $\nabla$.
Let $\Phi=\Phi_{\nabla}$ be the associated connection on $\G$. The associated measure~$\mu_\Phi$ gives zero weight to contractible cycles
and thus is supported on noncontractible CRSFs.

Let $\mu_{nonc}$ be the background measure on noncontractible CRSFs on $\G$ (giving a CRSF a probability proportional
to the product of its edge weights, that is, ignoring any connection). 
The measure $\mu_\Phi$ has density 
$\prod_{\gamma\subset\Gamma}\left(2-\Tr(w_\gamma)\right)$ 
with respect to~$\mu_{nonc}$. 

Although $\mu_{nonc}$ cannot itself be written as a connection measure $\mu_\Phi$
for some flat connection $\Phi$, we can use the $\mu_{\Phi}$ to study $\mu_{nonc}$, see Lemma~\ref{cylinders}
below.

\subsection{Graphs embedded on a curved surface}

\subsubsection{The Levi-Civita measure}

Suppose that $\G$ is geodesically embedded on a Riemannian surface $\Sigma$. There is a natural
complex line bundle on $\Sigma$, the tangent bundle.
Take $\nabla$ to be the Levi-Civita connection on the tangent bundle associated to a metric $g$ on $\Sigma$.
Define $\mu_{LC}$ to be the associated probability measure. It gives a CRSF a probability
proportional to 
$$\prod_e c(e)\prod_{\gamma\subset\Gamma}(2-2\cos\theta_\gamma)$$ where, by the Gauss-Bonnet Theorem,
 $\theta_\gamma$ is the Gaussian curvature
enclosed by $\gamma$. (If $\gamma$ is not contractible it is the ``net turning angle" of $\gamma$.)

\subsubsection{The CRST measure}

When $\Sigma$ is contractible, there is another measure $\mu_{LC^0}$ we can associate to this situation, introduced in \cite{Ke1}.  It
is supported on CRSTs (one-component CRSFs) of $\G$. Let $\Phi=\{e^{i\theta_e}\}_{e\in E}$ be the parallel transports on $\G$
defined from $\nabla$, and for real~$t$
let $\Phi_t=\{e^{it\theta_e}\}_{e\in E}$; these are well defined by contractibility of $\Sigma$. 
Let $\mu_{LC^0}$ be the limit as $t\to 0$ of the measures $\mu_{\Phi_t}$. Since loop weights are going to zero, there will be
only one loop remaining in the limit $t\to 0$, so the limit is a CRST.
In $\mu_{LC^0}$, each CRST $\Gamma$ has a weight proportional to (the product of the edge weights times)~$\theta_\gamma^2$, 
the square of the curvature enclosed by the unique cycle $\gamma$ of $\Gamma$. 

Let $Z_{LC^0}=\sum_{\mathrm{CRSTs}}\theta_\gamma^2\prod_{e}c(e)$ 
be the partition function of $\mu_{LC^0}$. By Theorem \ref{detDelta}, we have 
$$Z_{LC^0}=\lim_{t\to 0}t^{-2}\det\Delta_{\Phi_t}\,.$$ 
An explicit computation of this limit yields the following. Let $\kappa=\kappa(\G)$ be the weighted sum of spanning trees of~$\G$.

\begin{lemma}
We have $Z_{LC^0}=\kappa(\G)\langle\Theta,(I-T)\Theta\rangle$, where $\langle\cdot,\cdot\rangle$ is the usual
scalar product in the space of $1$-forms, $T$ is the transfer current (defined below), and $\Theta$ is the one-form on edges giving the angle $\theta$ of the connection.
\end{lemma}

This lemma also appears as Lemma~2 in~\cite[page 14]{KasWu} but we include the proof here for self-containedness.

The same statement (and proof) applies more generally to the $t\to0$ limit starting from any $U(1)$-connection, not necessarily the Levi-Civita
connection. However we will not have use for these other measures here.

\begin{proof}
The map $T$ is the orthogonal projection onto $\mathrm{Im}(d)$, 
which is the orthocomplement of the space of $\mathrm{Ker}(d^*)$. By a (generalization to varying conductances of a)
result of Biggs (\cite{Big}, Proposition 7.3)
the projection onto $\mathrm{Ker}(d^*)$ is 
given by 
$$\kappa^{-1}\sum_{\text{spanning trees }t}wt(t)\sum_{e\notin t}c_e\pi_{C(e,t)}$$
where $wt(t)=\prod_{e'\in t}c_{e'}$, $C(e,t)$ is the cycle of $t\cup e$ and $\pi_{C(e,t)}$ is the projection onto this cycle. 
Hence
$$T=I-\kappa^{-1}\sum_{\text{spanning trees }t}wt(t)\sum_{e\notin t}c_e\pi_{C(e,t)}.$$
We then have
\begin{eqnarray*}
\kappa\langle\Theta,(I-T)\Theta\rangle&=&\kappa\langle\Theta,{\kappa}^{-1}\sum_t wt(t)\sum_{e\notin t}c_e\pi_{C(e,t)}\Theta\rangle\\
&=&\sum_t wt(t)\sum_{e\notin t}c_e\langle\Theta, \pi_{C(e,t)}\Theta\rangle\\
&=&\sum_t wt(t)\sum_{e\notin t}c_e\frac{\theta_C^2}{|C(e,t)|}\\
&=&\sum_{\text{CRSTs }u}wt(u)\theta_C^2
\end{eqnarray*}
where we used $\langle w,\pi_v w\rangle=\frac{\langle w,v\rangle^2}{\langle v,v\rangle}$ in the second from last equality.
\end{proof}

\subsection{Exact sampling}\label{samples}
The measures $\mu_{nonc}$, $\mu_{LC^0}$, and $\mu_{LC}$ can be sampled using our generalization of
Wilson's algorithm as follows.

The measure $\mu_{nonc}$ is sampled by using a function $\alpha$ which assigns a loop weight~$0$ if it is contractible and $1$ otherwise.

For $\mu_{LC^0}$, we set $\alpha(\gamma)=\eps\theta_\gamma^2$ for small $\eps$. For small enough $\eps$
there will typically be only one loop (if there is more than one, start over).

We can sample $\mu_{LC}$ only in the case where the absolute value of the curvature $\theta$ enclosed by any curve doesn't exceed $\pi/2$. Indeed, in that case, we will always have $2-2\cos\theta\in[0,2]$ which is necessary to sample.

See Figures \ref{LC1}, \ref{LC3}, \ref{inc1} which are obtained by using fine conformal 
approximations to the underlying
surfaces (only the cycles of the CRSFs are drawn). Figures \ref{LC1}, right and \ref{LC3}, right, are samples conditional on enclosing curvature less than $\pi/2$. 
 
In order to sample the unconditional measures, one can use a general algorithm for determinantal processes with Hermitian kernels~\cite{HKPV} which is slower.

Figure \ref{LaminationBranches} shows a sample of $\mu_{nonc}$ on a multiply-connected planar domain. 
\begin{figure}[htbp]
\centering
\includegraphics[width=7.5cm,trim=0 110 0 0]{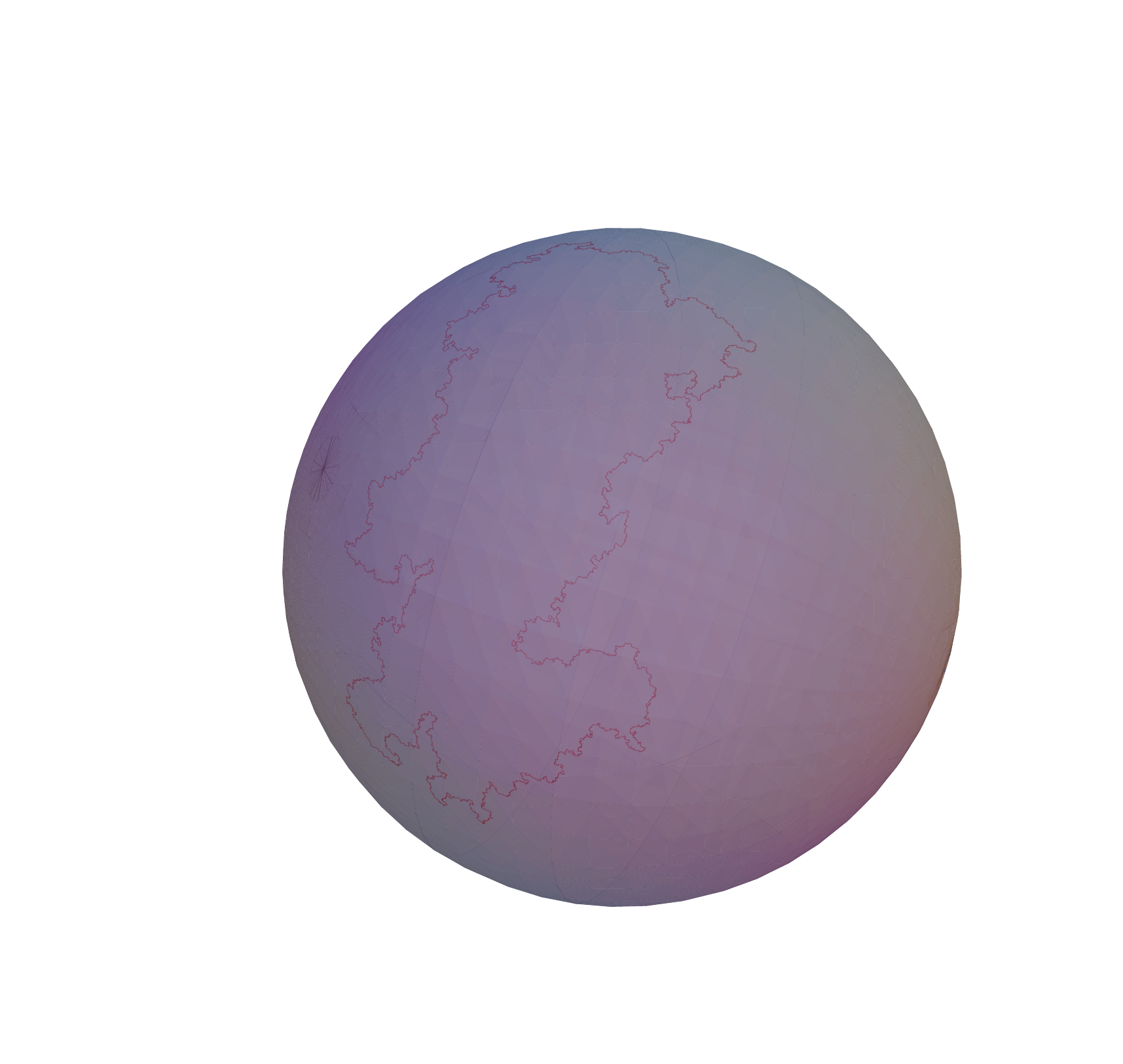}\hskip1cm\includegraphics[width=6.5cm,trim=0 110 0 0]{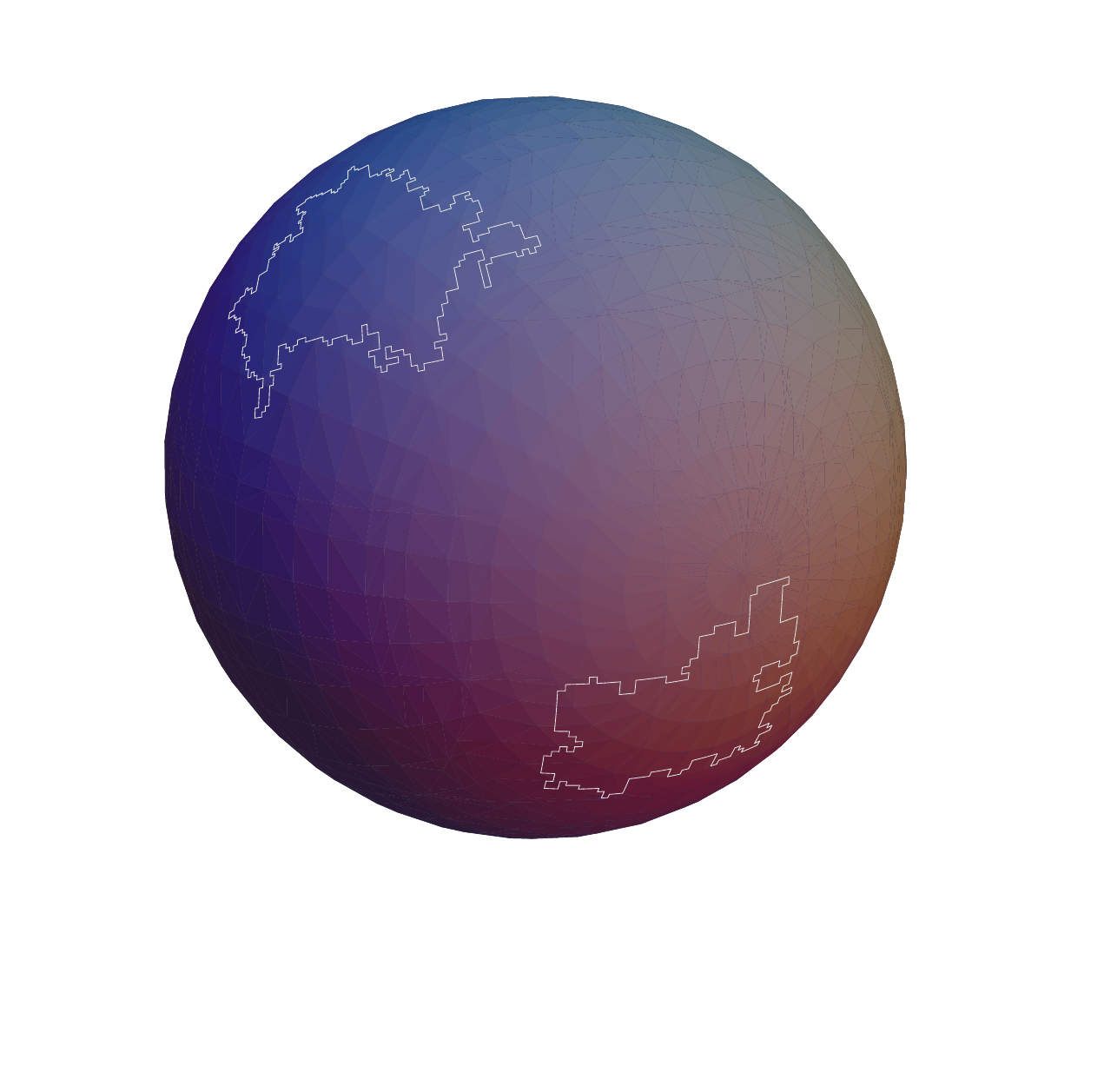}
\caption{
A $\mu_{LC^0}$-random CRSF on the sphere with its round metric; a $\mu_{LC}$-random CRSF on the sphere with two components (a rare event).}\label{LC1}
\end{figure}

\begin{figure}[htbp]
\centering
\includegraphics[width=7cm, trim=0 120 0 0]{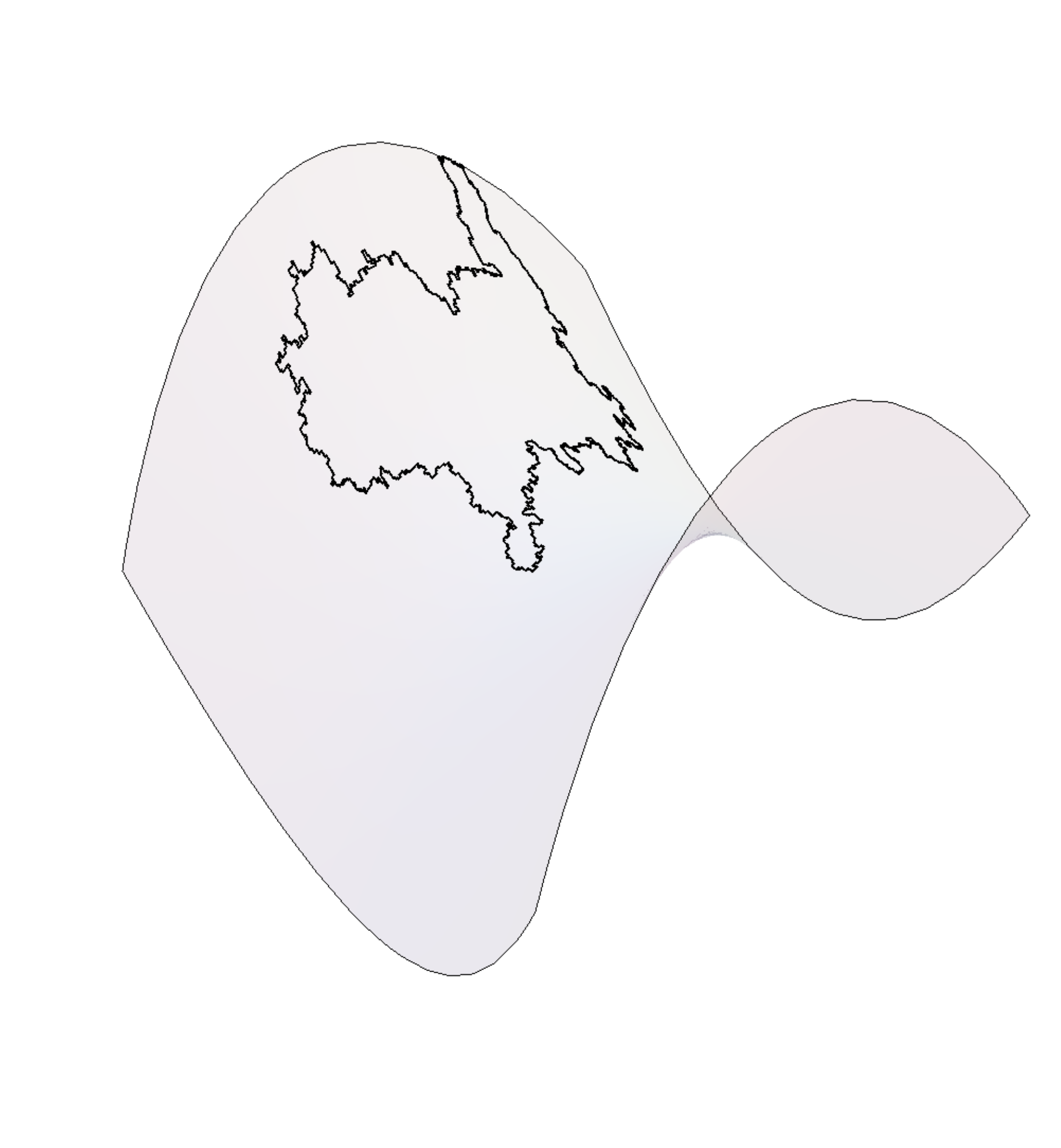}\hskip1cm\includegraphics[width=6cm]{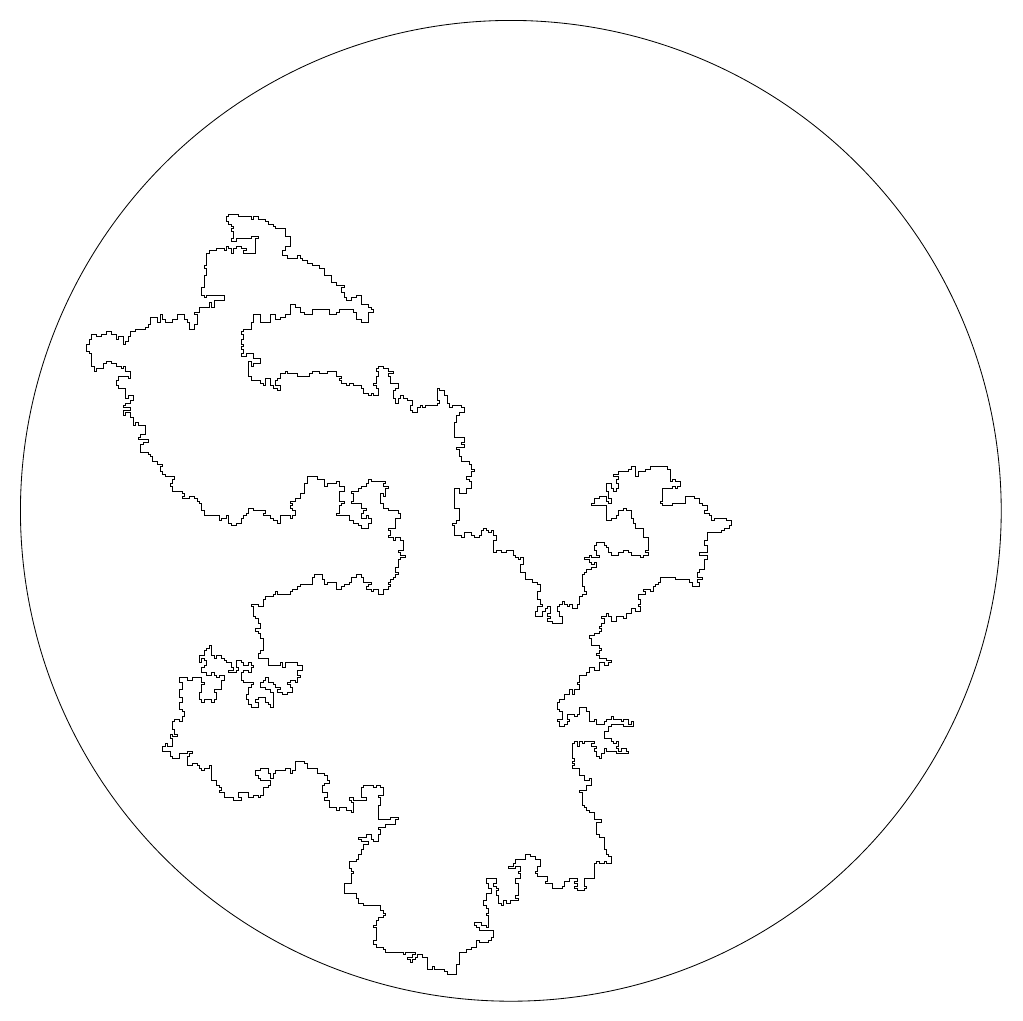}
\caption{A $\mu_{LC}$-random CRSF
on the saddle surface $z=x^2-y^2$ (left), and the hyperbolic plane (right).}\label{LC3}
\end{figure}

\begin{figure}[ht]
\centering
\includegraphics[width=8cm]{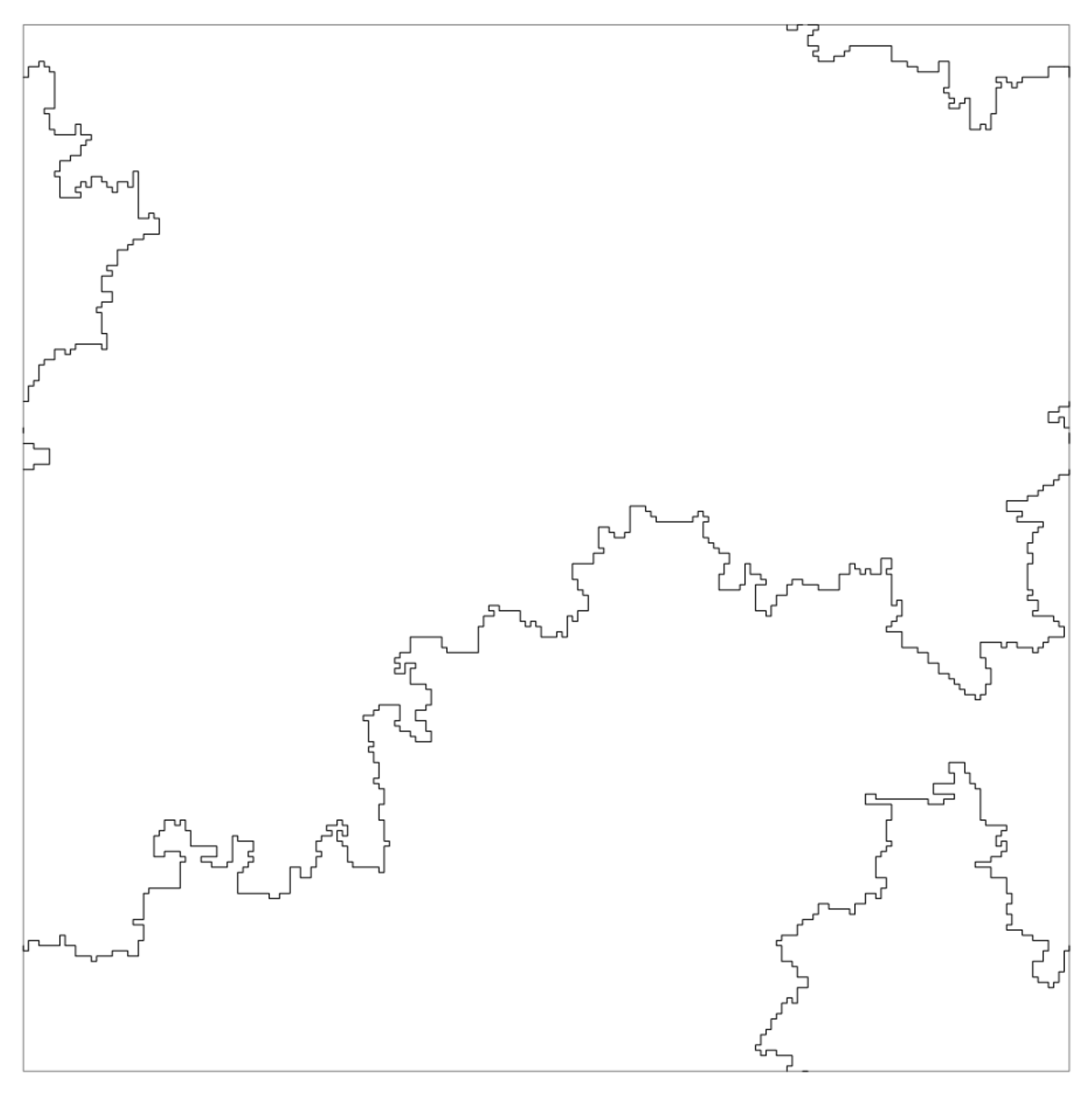}
\caption{A $\mu_{nonc}$-random CRSF on the flat torus
(obtained from the unit square by identifying
opposite sides).}\label{inc1}
\end{figure}

\begin{figure}[htbp]
\centering
\includegraphics[width=8cm]{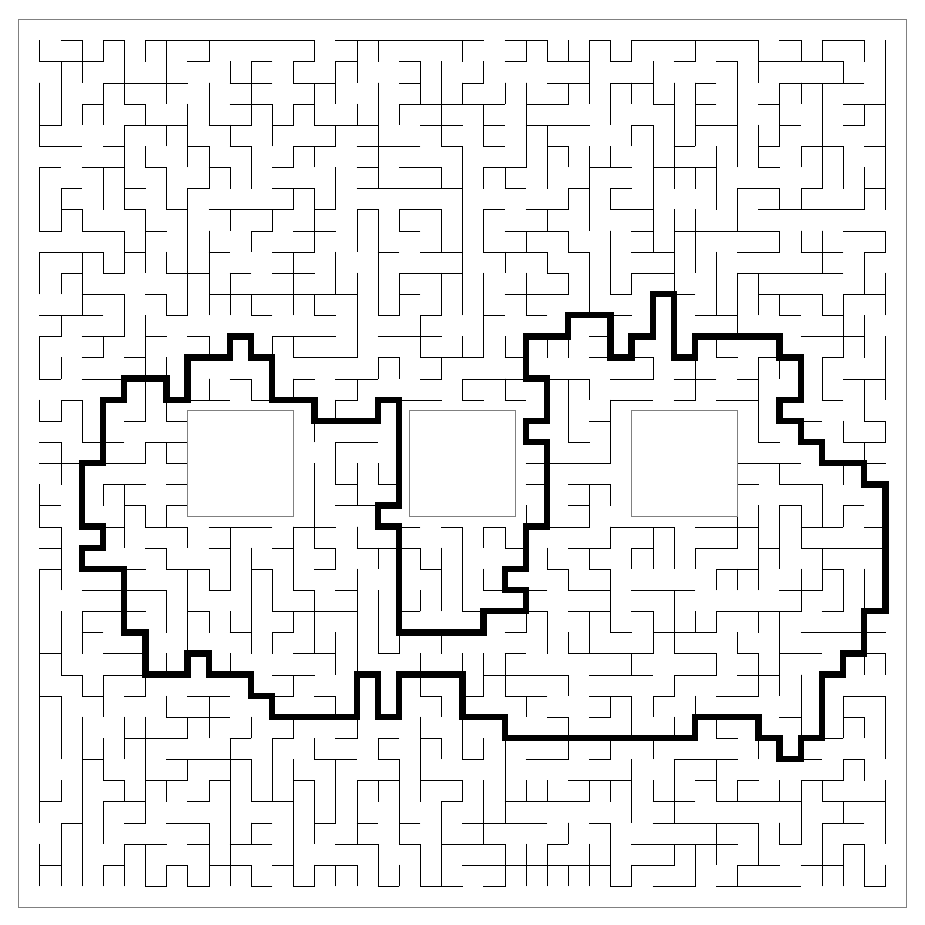}
\caption{A $\mu_{nonc}$-random CRSF on
a punctured disk conditioned on having a particular homotopy type}\label{LaminationBranches}
\end{figure}

\section{Scaling limits for graphs on surfaces}\label{scaling}

The measures $\mu_{nonc},\mu_{LC},\mu_{LC^0}$ induce measures on sets of cycles on $\G$,
by forgetting the rest of the CRSF.
We use notation $\P_{nonc}, \P_{LC}, \P_{LC^0}$ respectively for these cycle measures.

In this section, we prove our main statement (convergence of these cycle measures) in the following way. We first review in~\ref{confapprox} the conformal approximation setup in which we consider the scaling limit. Then in~\ref{metricspace}, we define the probability space in which convergence takes place, namely the metric space of loops up to time-reparameterization. A main ingredient is obtained in~\ref{probanoloopswired} where we show that the probability that there be no loop in the (wired) CRSF measure has a nontrivial limit. This is instrumental in~\ref{tightnesspart} to show that, in the limit, the number of loops remains finite and that the loops are necessarily macroscopic. Combined with earlier technical results on LERW by other authors, this implies tightness of the sequence of measures. We then conclude by Prokhorov's theorem, showing the convergence of a determining class of ``cylindrical'' events defined in~\ref{coeff}. This is done first in~\ref{flatconvergence} in the case of the measure $\P_{nonc}$ using the representation theory of the surface's fundamental group, then extended to flat connections, and finally used to prove the main result, Theorem~\ref{main} in~\ref{curvedconvergence}. The last subsection~\ref{commentandapplications} presents some applications of the main theorem.

\subsection{Conformal Approximation}\label{confapprox}

Let $(\G_n)$ be a sequence of (edge-weighted) graphs geodesically embedded in $\Sigma$ with
mesh size (longest edge length) going to zero.

There are a number of equivalent definitions of the notion of 
conformal approximation of $\Sigma$ by the sequence $(\G_n)$.
Perhaps the easiest is to say that conductance-weighted random walk on $\G_n$ converges to the 
Brownian motion on $\Sigma$, up to reparameterization. Another approach, more computationally useful, 
uses the transfer impedance.

\label{Transferimpedance}
Let $R^n(e,e')$ be the transfer impedance
of two oriented edges $e=xy$ and $e'=x'y'$ in $\G_n$.
This is defined as the potential drop across $e'$ when one unit of current enters at $e_-$ and leaves at $e_+$ (the endpoints of $e$), 
when the graph is viewed as an electrical network with conductance $c$.
In terms of the Green function $G^n$ one has 
$$R^n(e,e')=G^n(e_+,e'_+)-G^n(e_+,e'_-)-G^n(e_-,e'_+)+G^n(e_-,e'_-).$$
A related quantity is $T(e,e')=c(e')R(e,e')$ the \emph{transfer current}: the current across
$e'$ when one unit of current enters at $e_-$ and leaves at $e_+$. 

The function $R^n(e,e'),$ as a function of either $e$ or $e'$, 
is a discrete one-form on $\G_n$ (function on oriented edges which changes sign under change
of orientation). We say that \emph{conformal approximation holds} if the mesh size tends to zero as $n\to\infty$ and,
when $e_1,e_2$ are not within $o(1)$ of each other, 
\begin{equation}\label{conformalapproximation}
R^{n}(e_1,e_2)=(D_{e_1}D_{e_2}g(z_1,z_2))\ell(e_1)\ell(e_2)+o(\ell(e_1)\ell(e_2)\,,
\end{equation}
where $g$ is the continuous Green function, $\ell(e_i)$ is the edge length,
and the notation~$D_{e_i}g$ represents the directional  
derivative in the direction of the edge $e_i$ applied to the variable~$z_i$, that is
$$D_{e_1}g(z_1,z_2) = \lim_{\delta\to0} \frac{g(z_1+\delta u_1,z_2)-g(z_1,z_2)}{\delta}$$
where $u_1$ is the unit displacement in the direction of $e_1$,
and similarly for $D_{e_2}$ for the second variable $z_2$.

If we sum the transfer impedance for $e_1$ varying along a path
$\gamma_{a,b}$ from vertex $a$ to $b$, and $e_2$ on $\gamma_{c,d}$ from $c$ to $d$, we find that 
\begin{multline}\sum_{e_1\in \gamma_{a,b}}\sum_{e_2\in\gamma_{c,d}}G^n(e_1^-,e_2^-)-G^n(e_1^-,e_2^+)-G^n(e_1^+,e_2^-)+G^n(e_1^+,e_2^+)=\\g(a,c)-g(b,c)-g(a,d)+g(b,d)+o(1).\end{multline}
This quantity represents the change in potential from $c$ to $d$ when one unit of current
enters at $a$ and exits at $b$. As a function $F(c)$ of $c$, this is the unique harmonic function with 
appropriate logarithmic singularities
at $a$ and $b$, and which is zero at~$d$. In particular the level curves of $F$ are equipotentials and,
taking equipotentials near $a$ and $b$ (which are close to circles), one can thus 
compute the resistance between a small circle around $a$ and a small circle around $b$.
In this way, using convergence of the transfer impedance, one can show that the main notions of potential theory including
the Poisson kernel, Cauchy kernel, holomorphic functions, {\it etc.}\! all behave well under conformal approximation.

As an example of a conformally approximating sequence, standard arguments show that the square
grid in $\C$, scaled by $\eps$, conformally approximates any planar domain in $\C$ as $\eps\to0$. 
Thus for a simply connected domain in $\Sigma$
we can pull back a fine square grid in $\C$ under a conformal
map from a domain in $\C$ to $\Sigma$. 
More generally the (almost-)equally-spaced real and imaginary curves of a holomorphic quadratic differential 
$\phi(z)dz^2$ on $\Sigma$ give
a graph structure on $\Sigma$, with unit conductances, conformally approximating the surface. 
Other examples from Poisson point processes are given in \cite{GK}.

\subsection{The probability space of simple multiloops}\label{metricspace}

For a graph $\G_n$, let $\P^n$ be one of the measures on loops discussed above. 
We can see the probability measures $\P^n$ for different $n$s as living on the same probability space~$\Omega$ that we now describe. 

A \emph{multiloop} in $\Sigma$ is a finite union of simple loops (ignoring parameterization) with disjoint images, that is,
an injective map from the union of $k$ copies of the unit circle $\T$ to $\Sigma$, 
for some $k\ge1$ (and modulo reparameterizations). 
The space of single simple loops is a metric space: the distance is defined by
$$d(f,g) = \inf_{\alpha}\sup_{t\in S^1}\|f(t)-g(\alpha(t))\|$$
where the infimum is over all reparameterizations $\alpha$. In other words two loops are close if they can be parameterized
so that their images are close for all parameter values $t$. One can extend this distance to the case of multiloops by
taking the infimum over all permutations of loops, of the max of the 
pairwise distances (and defining the distance between $\Omega_k$ and 
$\Omega_{k'}$ for $k\ne k'$ to be infinite).
With this distance $\Omega$ is a topological metric space. It is a disjoint union $\Omega=\cup_{k=1}^\infty\Omega_k$ where $\Omega_k$ consists of multiloops with $k$ components.

This space is not complete: it is easy to construct Cauchy sequences that shrink to a point or to non-simple loops. 
However it is separable: take all finite multiloops lying on fine lattice approximations of the surface. This is a countable family of loops which is dense in~$\Omega$.

The set of cycles of a CRSF on $\G_n$ defines an element of $\Omega$. 

A \emph{finite lamination} on a surface is an isotopy class of a multiloop.
For any points $x_1,\ldots, x_m\in\Sigma$, and small $\delta>0$, let $B_i$ be a ball around $x_i$ of some radius less than $\delta$, and consider the finite laminations in $\Sigma\setminus\{B_1\cup\cdots\cup B_m\}$. Any 
multiloop~$\gamma$ which avoids the balls $B_i$ defines a lamination $[\gamma]$. 
For any of these laminations $L$ we consider the event

$$E_{B_1,\dots,B_m;L}=\left\{\gamma\in\Omega\quad{}~|~\quad{} [\gamma]=L\right\}\,.$$

We call these sets \emph{cylindrical events} and consider the $\sigma$-field $\B$ on $\Omega$ generated by these events. 

\begin{lemma}$\B$ contains the Borel sets in $\Omega$. 
\end{lemma}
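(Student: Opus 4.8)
The plan is to exploit the separability of $(\Omega,d)$ to reduce the statement to a geometric claim about cylindrical events. Since a separable metric space is second countable, the Borel $\sigma$-field of $\Omega$ is generated by a countable base of open balls; concretely we may take the balls $B(\gamma_0,q)$ with $\gamma_0$ ranging over the countable dense set $D$ of finite multiloops supported on lattice approximations and $q\in\mathbb{Q}_{>0}$. As every open set is a countable union of such balls, it suffices to prove that each $B(\gamma_0,q)\in\B$. I would further restrict attention to the countable subfamily $\mathcal{E}$ of cylindrical events $E_{B_1,\dots,B_m;L}$ whose defining balls have centres in a fixed countable dense subset of $\Sigma$ and rational radii (with $L$ ranging over the countably many laminations attached to each such configuration); these all lie in $\B$, and so do their countable unions.

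The key is the following \emph{resolution lemma}: for every $\gamma\in\Omega$ and every $\eta>0$ there is a configuration of balls $B_1,\dots,B_m$ from the admissible countable family, all avoiding $\gamma$, such that $\gamma\in E_{B_1,\dots,B_m;[\gamma]}$ and $\mathrm{diam}_d\bigl(E_{B_1,\dots,B_m;[\gamma]}\bigr)<\eta$. Granting this, I claim $B(\gamma_0,q)=\bigcup\{E\in\mathcal{E}:E\subseteq B(\gamma_0,q)\}$, a countable union and hence a member of $\B$. The inclusion $\supseteq$ is immediate. For $\subseteq$, take $\gamma\in B(\gamma_0,q)$, put $r'=d(\gamma,\gamma_0)<q$, and apply the resolution lemma with $\eta=q-r'$: the resulting event $E=E_{B_1,\dots,B_m;[\gamma]}$ contains $\gamma$ and has diameter less than $q-r'$, so every $\gamma'\in E$ satisfies $d(\gamma',\gamma_0)\le d(\gamma',\gamma)+r'<q$, i.e. $E\subseteq B(\gamma_0,q)$. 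Thus $\gamma$ lies in one of the events in the union, proving the claim and hence the lemma.

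The main obstacle is the resolution lemma, which is where the geometry of the surface enters. The point is that an element $\gamma$ of $\Omega$ is a genuine simple multiloop with pairwise disjoint images, hence has a positive minimal feature size $\rho>0$ (minimal diameter of a component and minimal separation between distinct strands). I would choose a net of balls of radius $\delta\ll\rho$ whose centres are $\delta$-dense in $\Sigma$ and which avoid $\gamma$; such a configuration can be found within the countable admissible family by a small perturbation. The isotopy class $[\gamma]$ in $\Sigma\setminus\bigcup_iB_i$ then records exactly which balls each component of $\gamma$ separates from which, and because the complementary region is a web of width $O(\delta)$ with no empty chamber larger than $O(\delta)$, any $\gamma'$ with $[\gamma']=[\gamma]$ must thread the same corridors and therefore lie in an $O(\delta)$-neighbourhood of $\gamma$; matching the two multiloops by projecting across these thin corridors yields a reparameterization witnessing $d(\gamma,\gamma')=O(\delta)$, which is below $\eta$ once $\delta$ is small. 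The two delicate points I would treat with care are the control of the reparameterized distance $d$ (as opposed to Hausdorff distance) inside the thin web, and the extension of the separation statement to surfaces of positive genus, where $[\gamma]$ carries additional but still discrete combinatorial data that a sufficiently fine net resolves in the same way.
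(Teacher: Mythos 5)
Your reduction via second countability is fine, but the \emph{resolution lemma} on which everything rests is false, and the failure is not one of the ``delicate points'' you defer but a fundamental obstruction. Fix $\gamma$ and \emph{any} finite configuration of balls $B_1,\dots,B_m$ avoiding $\gamma$, however fine. Isotopy classes rel finitely many holes are blind to thin ``fingers'': modify $\gamma$ by replacing a tiny arc with a thin excursion that runs out along an embedded arc $\beta$ in $\Sigma\setminus(\gamma\cup B_1\cup\dots\cup B_m)$ and back. The resulting $\gamma'$ is still a simple multiloop avoiding the balls, and $[\gamma']=[\gamma]$ in $\Sigma\setminus\{B_1\cup\dots\cup B_m\}$, because the finger retracts through the strip between its two strands, which contains no ball. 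If the balls are disjoint, their complement is a \emph{connected} open set, so $\beta$ can reach within $O(\delta)$ of the point of $\Sigma$ farthest from $\gamma$; then $d(\gamma,\gamma')\geq \mathrm{dist}(\mathrm{tip},\gamma)$ is bounded below by a constant depending only on $\gamma$ and $\Sigma$, not on the net. If instead you let the balls overlap so that their union confines curves to an $O(\delta)$-corridor around $\gamma$, the finger can still run \emph{parallel to $\gamma$ inside the corridor}, doubling back along a macroscopic arc: this keeps the Hausdorff distance $O(\delta)$ but keeps $d$ bounded below, since a homeomorphism of $S^1$ is monotone and cannot track a parameterization that retraces a long arc of $\gamma$ backwards. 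Either way, every nonempty cylindrical event $E_{B_1,\dots,B_m;L}$ has $d$-diameter bounded below by a positive constant, so for small $q$ the union $\bigcup\{E\in\mathcal{E}:E\subseteq B(\gamma_0,q)\}$ is empty while $B(\gamma_0,q)$ is not, and your decomposition of open balls collapses.

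The paper's proof is built to avoid exactly this: it never asserts that a cylindrical event pins a curve into a small $d$-neighborhood. Instead it shows that the coarse event ``$\gamma$ maps into the $\varepsilon$-annulus $U_\eps(c)$ and winds once'' is a countable combination of cylindrical events, using balls of radii $\delta/i$ centred at a sequence of points \emph{dense in the boundary} $\partial U_\eps(c)$: a finger poking out of the annulus must cross that boundary and thereby traps some $x_j$ on the wrong side of the curve, contradicting the required separation pattern. (Fingers inside the annulus are harmless, because the annulus event constrains only the image and the winding, not the parameterization.) The density of the trapping points on a curve that any escaping finger must cross is the ingredient your construction lacks, and the remaining assertion the paper makes---that such image-confinement events generate the Borel sets of $(\Omega,d)$---is precisely the step that cannot be replaced by a small-diameter argument.
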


\begin{proof} 
We just prove this for one loop, that is, for $\Omega_1$. The proof is easily extended to the general case.

Let $c$ be a smooth simple closed curve in $\Sigma$ and for some small $\eps>0$ let
$U_\eps(c)$ be its $\eps$-neighborhood.
Consider the event $E$ that the random curve 
$\gamma$ maps into $U_\eps(c)$, winding once around the annulus with, say, the positive orientation. 
These types of events generate the Borel sets.

Let $x_1,x_2,\dots$ be a sequence of points dense in the boundary of $U_\eps(c)$. Let $B_i$ be a ball
around $x_i$ of radius $\delta/i$. 
Let $E_{\delta;n}$ be the cylinder event that the curve $\gamma$
in $\Sigma\setminus\{B_1\cup\cdots\cup B_m\}$
separates the points $\{x_1,\dots,x_n\}$ on the
two boundary components of $U_\eps(c)$, that is, is consistent with $\gamma$ winding once around $U_\eps(c)$. 
The event $E$ is contained in the intersection over $n$ of the $E_{\delta;n}$.
In fact we have $E=\cap_{n}E_{0;n}=\cup_{\delta\to 0}\cap_n E_{\delta;n}$: any continuous simple loop which separates the points lies in the
interior of the annulus and winds once around. This can be seen as follows. First of all, any curve in $\cap_{n}E_{0;n}$ is contained in $U_\eps(c)$: otherwise, we could find some $x_j$ lying on the wrong side of the curve since the family $(x_n)_{n\geq 1}$ is dense in the boundary which would yield a contradiction. Second, the curve cannot be contractible since this would contradict the fact that it separates the points. Hence it winds once around the hole of the annulus. Its orientation is necessarily positive. 
\end{proof}

\begin{lemma}
The set of cylindrical events forms a determining class for $\B$.
\end{lemma}
\begin{proof}
This class is stable under finite intersection and generates the sigma-field.
\end{proof}

\subsection{The probability of no loops}\label{probanoloopswired}

Recall that on a graph with boundary, an essential CRSF is a subgraph, each of whose components is either a unicycle
not containing any boundary vertex, or a tree containing a single boundary vertex. 
For a graph with boundary and flat connection~$\Phi$, the probability that a $\mu_{\Phi}$-random
essential CRSF has no loops tends to $1$ 
as the holonomy tends to the identity, since each noncontractible cycle has weight $2-2\cos\theta$ 
which tends to zero.

For $\mu_{LC}$, the following theorem describes a similar result. For a graph embedded on a surface $\Sigma$ with boundary,
we define the boundary of the graph to be the set of vertices adjacent to the boundary of $\Sigma$. Let $\{\G^n\}_{n=1,2,\dots}$ be a sequence of graphs conformally approximating $\Sigma$, where $\G^n$ has mesh size at most $1/n$.

The intuition behind the following theorem is that one may express the quantities as functionals of the random walk loop soup and observe that the random walk loop soup converges to the Brownian one, see e.g. \cite{LF}.

\begin{theorem}\label{universality}
For any compact surface with nonempty boundary and any smooth unitary connection $\nabla$ on a line bundle on $\Sigma$, the probability $\P^n(\text{no loops})$ converges to a universal limit $p(\Sigma)$.
\end{theorem}

\begin{proof}
Let $\Delta_{\text{Id}}$ be the Laplacian on $\G^n$ for the trivial connection.
We have
$$
\P^n(\text{no loops})=\frac{\det \Delta_{\text{Id}}}{\det \Delta_\Phi}.$$
Write $\Delta_{\Phi} = D(I-P_\Phi)$ where $D$ is the diagonal ``degree" (sum of weights) matrix.
Then 
\begin{eqnarray}\nonumber
-\log \P^n(\text{no loops})&=&-{\Tr}\log (I-P_{\text{Id}})+\Tr\log (I-P_{\Phi}). 
\\
\label{int} &=& \sum_{k=1}^\infty \frac{\Tr P_{\text{Id}}^k}{k}-\frac{\Tr P_\Phi^k}{k}\\
\nonumber &=& \sum_{\text{loops~}\ell}\P(\ell)(1-\cos\theta_\ell)
\end{eqnarray}
where the sum is over unrooted loops $\ell$, and where $\P(\ell)$ is the probability of $\ell$ (for the weighted random walk started at some vertex of $\ell$) and $e^{i\theta_\ell}$ is the holonomy of the loop $\ell$.

We divide this sum into three parts: tiny loops (consisting of at most $\eps\sqrt{n}$ steps), small loops
(consisting of at most $\eps n^2$ steps),
and large loops (at least $\eps n^2$ steps).

For tiny loops of $k$ steps, the area enclosed is at most $O(k^2/n^2)$ by the isoperimetric inequality.
Since $\theta_\ell$ is at most a constant times the enclosed area,
the contribution for tiny loops is bounded by 
$$\sum_{k=1}^{\eps\sqrt{n}} \frac{\Tr(P^k)}{k}O\left((k^2/n^2)^2\right).$$
Since $P$ is substochastic, $\text{Tr}(P^k)=O(n^2),$ and the total contribution is $O(\eps^2)$.

For small loops of length $k$, we argue that they enclose signed area $O(k/n^2)$.
We use a small generalization of the following result of Wehn, \cite{Wehn}: 
a two-dimensional simple random walk of length $k$,
conditioned to return to the origin, encloses a signed area (that is, the integral of $x\,dy$) of order $O(N)$, that is, has standard deviation $O(N)$. The argument is as follows.
The signed area is obtained from an integral $\int x\,dy$ along the loop 
(where $(x,y)$ are local orthogonal coordinates with origin at the starting point).
Each step makes an essentially independent contribution to the integral (the only dependence being the 
condition that the loop is closed after $k$ steps). By grouping several steps at a time, the mean contribution
for each group is zero, whereas the variance is of order $k/n^4$, since $x=O(\sqrt{k}/n)$ and $dy=O(1/n)$.   
Summing the variance over the $k$ steps, one gets a total variance $O(k^2/n^4)$.
Using the fact that a random walk of length $k$ returns to its starting point with
probability $O(1/k)$, the contribution for small loops is
$$\sum_{k=1}^{\eps n^2} \frac{\Tr(P^k)}{k}O(k^2/n^4)=\sum_{k=1}^{\eps n^2} O\left(\frac{n^2}{k^2}\right)O(k^2/n^4)=O(\eps).$$

By~\cite{LF}, the sum over large loops converges to the analogous continuous loop measure. This is because long loops can be approximated with Brownian excursions.
Along such an excursion the parallel transport is approximated by the Brownian
parallel transport\footnote{The \emph{Brownian parallel transport} is defined as a limit of the parallel transport along piecewise geodesic approximations to
the Brownian motion: for a Brownian path $B(t)_{t\in[0,1]}$, take for example for $N$ large the piecewise geodesic path connecting the points $B(j/N)$ and $B((j+1)/N)$ for $j\in\{0,N-1\}$. It\^o showed in \cite{Ito} that almost surely the parallel transports along the piecewise geodesic path converge in the limit $N\to\infty$ to a quantity
independent of the approximating discrete path.}.   This Brownian parallel
transport is a universal quantity in the sense of being independent of the underlying graph, only depending on the underlying smooth connection.
\end{proof}

Suppose now $\nabla$ is close to the identity: that is, for some small $c>0$, 
the integral of the absolute value of the curvature of~$\nabla$ is less than $c$, 
and the holonomies $w$ of~$\nabla$ on a fixed cycle basis satisfy $|w-1|<c$.

\begin{corollary}\label{noloopatall}For $\nabla$ close to the identity in the above sense, the probability of no loops satisfies
$$\P^n(\text{no loops})=1+o(1)$$
where the error is independent of $n$.
\end{corollary}

\begin{proof}
In the proof of the above theorem, with a total curvature bound $c$, the term $1-\cos\theta_\ell$ is of
order $c^2O(\theta_\ell^2).$
Thus the ratio of determinants, and hence the probability of no loops, is $1-O(c^2)$. \end{proof}

\subsection{Asymptotic size of loops and tightness of the measures}\label{tightnesspart}

\subsubsection{Macroscopic loops}
For $\P_{nonc}^n$, the loops necessarily are macroscopic since they are noncontractible. 
In the curved case for the measures $\P_{LC}^n$ and $\P_{LC^0}^n$, we show that there are, with positive probability,
macroscopic loops.

\begin{theorem}\label{nonzero} With positive probability $\P_{LC}^n$ and $\P_{LC^0}^n$ contain a macroscopic loop, that is,
for sufficiently small $\eps>0$ the probability that there is a loop with diameter $\ge\eps$ does not tend to zero with $n$. 
\end{theorem}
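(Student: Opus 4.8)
The plan is to prove the lower bound directly from the cycle‑popping sampler of Theorem~\ref{algo}, exploiting the fact that the loop weight $\alpha(\gamma)\approx\tfrac12\theta_\gamma^2$ decays like the fourth power of the loop diameter. First I would fix the geometry: since $\Sigma$ is curved, choose a point $p_0$ with nonzero Gaussian curvature and a fixed (macroscopic) topological annulus $R$ around $p_0$, small enough that the curvature has constant sign on the disk bounded by its outer circle and that every loop winding once around $R$ inside $R$ encloses curvature $\theta$ with $\theta_0\le|\theta|<\pi/2$ for some $\theta_0>0$. Such a winding loop is macroscopic (diameter at least the inner radius $\eps$ of $R$) and has weight bounded below, $2-2\cos\theta\ge c_0:=2-2\cos\theta_0>0$; moreover $2-2\cos\theta<2$, so $\alpha=(2-2\cos\theta)/2\in[0,1]$. (For the moment assume every enclosed curvature is $<\pi/2$ in absolute value, so that Theorem~\ref{algo} applies; the general case is treated at the end.) It then suffices to show that, running the procedure $P[v_0,\emptyset]$ from a vertex $v_0$ inside the inner disk of $R$, the \emph{first kept loop} is macroscopic with probability bounded below uniformly in $n$, since a kept loop persists in the output CRSF.

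For the main estimate I would compare keep‑probabilities scale by scale. Fix a threshold $\delta\le\eps$ and call a loop \emph{small} if its diameter is $<\delta$. Because the walk with $\Gamma=\emptyset$ runs until a loop is kept and almost surely makes infinitely many macroscopic excursions around $R$, a winding loop of diameter $\ge\eps$ is formed and kept after a geometric (mean $\le 1/c_0$) number of macroscopic excursions. The first kept loop fails to be macroscopic only if some small loop is kept earlier, so
$$\P(\text{first kept loop is small})\ \le\ \E\Big[\sum_{\gamma}\alpha(\gamma)\Big],$$
where the sum runs over the small loops erased before the first macroscopic loop is kept. Here is the crucial point: for a loop of diameter $d$ the enclosed curvature is $\theta_\gamma\approx K\cdot(\text{area})=O(d^2)$, hence $\alpha(\gamma)\le C d^4$. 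The small loops erased are exactly those erased during $O(1/c_0)$ loop‑erased excursions of diameter $\ge\eps$, and at a dyadic scale $d$ there are at most $O((\eps/d)^2)$ of them (by the loop‑measure count in the $\eps$‑region). Therefore $\sum_{d<\delta}d^4\,(\eps/d)^2=\eps^2\sum_{d<\delta}d^2\approx C\eps^2\delta^2$, and summing over the excursions gives $\P(\text{first kept loop is small})\le C'\delta^2$, which is $<\tfrac12$ once $\delta$ is small. Thus the first kept loop is macroscopic with probability $\ge\tfrac12$, proving the theorem for $\P_{LC}$ with this fixed $\eps$.

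The step I expect to be the main obstacle is this loop‑intensity estimate made uniform in $n$: that the expected number of diameter‑$\approx d$ loops erased along a macroscopic loop‑erased excursion is $O((\eps/d)^2)$, with no logarithmic correction surviving after multiplication by $d^4$. This is where the fourth power is essential — the analogous bound fails for the enclosed \emph{area} (power $2$), whose sum $\sum_{d<\delta}d^2(\eps/d)^2$ would accumulate a factor $\log n$. I would establish it from the convergence of the discrete loop measure to the Brownian loop measure under conformal approximation (equivalently, from uniform bounds on $G^n$ and its derivatives, as in Section~\ref{Transferimpedance}), together with Wilson's stack‑of‑cycles description of the popped loops. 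The same conformal‑approximation input supplies the complementary uniform lower bound that a macroscopic winding loop is formed and kept at all.

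Finally I would transfer the argument to the remaining measures. For $\P_{LC^0}$ one runs the sampler with $\alpha(\gamma)=\eps'\theta_\gamma^2$; the prefactor $\eps'$ multiplies both the small‑loop weights ($\alpha\le C\eps' d^4$) and the mean number $\E[J]\le 1/(\eps'\theta_0^2)$ of excursions needed to keep a macroscopic loop, so it cancels and the bound $C'\delta^2$ is unchanged, and conditioning on a single loop (as in the definition of $\mu_{LC^0}$) only selects this first kept loop. For $\P_{LC}$ on a surface whose curvatures exceed $\pi/2$ the sampler of Theorem~\ref{algo} does not apply globally; there I would run the identical scale‑comparison argument inside the determinantal sampler of~\cite{HKPV}, whose inclusion weights again vanish like $d^4$ for small loops, so that the suppression of small loops persists. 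The crux in every case is the uniform, logarithm‑free loop‑intensity bound at small scales.
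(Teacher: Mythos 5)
Your approach is genuinely different from the paper's, and its skeleton (run the sampler from a vertex near a point of nonzero curvature; the first kept loop survives in the output; small loops of diameter $d$ are kept with probability $O(d^4)$, so a multi-scale union bound over dyadic scales converges thanks to the fourth power) is attractive and, at the heuristic level, sound. The paper instead gives a \emph{static} second-moment argument: writing $\P_{LC^0}(E)=\bigl(\sum_E \mathrm{Area}^2\bigr)/\bigl(\sum_{\mathrm{CRSTs}}\mathrm{Area}^2\bigr)$ for the event $E$ that two fixed macroscopically separated faces are enclosed, and citing \cite{KKW} for the two inputs $\P(f_1,f_2)\asymp n^{-2}$ and $\E_{\mathrm{unif}}[\mathrm{Area}^2]\asymp n^2$, plus the fact that on $E$ the area is $\geq \eps n^2$ with high probability; then $\P_{LC}$ is handled by absolute continuity with respect to $\P_{LC^0}$. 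The crucial difference is what inputs each proof needs: the paper's quantities are \emph{macroscopic} functionals computable from transfer impedances at macroscopically separated edges, which is exactly what conformal approximation (equation~\eqref{conformalapproximation}) provides, so universality beyond $\Z^2$ is cheap.

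Your proof, by contrast, hinges on the step you yourself flag: a loop-intensity bound (expected number of erased loops of diameter $\approx d$ per macroscopic excursion is $O((\eps/d)^2)$, possibly with harmless logarithmic corrections), \emph{uniformly in $n$ and in all scales $d\in[1/n,\delta]$}, together with a uniform-in-$n$ positive probability of closing and keeping a winding loop per excursion. These are scale-uniform random-walk estimates (exit times $\asymp d^2$, annulus-crossing counts, Harnack-type bounds at every scale), and they do \emph{not} follow from the paper's notion of conformal approximation, which constrains $T^n(e_1,e_2)$ only when $e_1,e_2$ are \emph{not} within $o(1)$ of each other; convergence of the discrete loop measure to the Brownian loop measure is likewise a statement at fixed macroscopic scales and cannot be bootstrapped down to the lattice scale for a general approximating sequence $(\G_n)$. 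So as written the core estimate is a genuine gap (it is believable for the square grid, but then the claimed generality is lost, and unlike the paper you have no transfer-impedance route to restore it). A second, smaller flaw: your fallback for $\P_{LC}$ when the curvature exceeds $\pi/2$ --- ``run the identical scale-comparison argument inside the determinantal sampler of~\cite{HKPV}'' --- is not an argument, since that sampler has no loop-by-loop keep/erase structure with weights vanishing like $d^4$. The fix is the paper's: prove the statement for $\P_{LC^0}$ (where $\alpha(\gamma)=\eps'\theta_\gamma^2\in[0,1]$ needs no curvature restriction) and deduce it for $\P_{LC}$ by absolute continuity, since on the single-loop event the densities $2-2\cos\theta$ and $\theta^2$ are comparable.
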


\begin{proof}
We can suppose the surface is a disk: if not, take a point of nonzero curvature on the surface and a disk around it, small enough so that the curvature is roughly constant on the disk.
Wiring the boundary of this disk, and making the disk boundary part of the surface boundary (effectively cutting the surface apart along the disk boundary) 
\emph{decreases} the probability of finding a macroscopic loop in this disk, since the loop-erased walk from any interior point will halt sooner. So once we prove the theorem for a disk 
with nonzero curvature we are done.

We will prove the theorem for $\P_{LC^0}^n$. Since we
are assuming at this point that the surface is a disk, this will also suffice for $\P_{LC}^n$
by the following argument. Choose the disk small enough so that the probability of no loops is at least $1/2$.
Then for each loop discovered by the algorithm, the conditional probability of having no further loops is at least $1/2$
(since each new loop discovered decreases the probability of further loops).
Thus the number of loops is smaller than a geometric random variable of rate $1/2$, and the probability of $1$
loop strictly dominates the probability of more than one loop. The change of loop weight between $\P_{LC}^n$
and $\P_{LC^0}^n$ tends to~$1$ with small curvature $\theta$, so these are absolutely continuous with a Radon-Nikodym
derivative independent of $n$. 

Now consider the case of $\P_{LC^0}^n$.
For this, consider first the $n\times n$ grid $H_n$ scaled by $1/n$ to the unit square $[0,1]^2$.
Let $z_1\ne z_2\in(0,1)^2$ and let $f_1,f_2$ be faces of the grid close to $z_1,z_2$ respectively.
In \cite{KKW}, it is shown that
the probability $\P(f_1,f_2)$ that $f_1,f_2$ are enclosed in the cycle of a uniform CRST of $H_n$
is $const/n^2+o(1/n^2)$.
Let $E$ be the event that $f_1,f_2$ are enclosed in the cycle. On this event, the area of the enclosing cycle
is with high probability $\ge \eps n^2$ for some $\eps>0$ (see below, where it is shown that the loops are absolutely continuous with
$\SLE_2$). 
Then 
\be\label{PLC0}\P_{LC^0}^n(E) = \frac{\sum_E \text{Area}^2}{\sum_{CRSTs} \text{Area}^2}\ge 
\frac{\eps^2 n^4\sum_E1}{Cn^2\sum_{CRSTs}1}=\eps^2 C'n^2\P(f_1,f_2)
\ee
for constants $C,C'$. Here the denominator of the central inequality follows from the result of \cite{KKW}
that the second moment of the area of a uniform CRST is of order~$n^2$ times a constant. 
The right-hand side of (\ref{PLC0}) is bounded below by a positive constant independently of $n$.

A similar argument holds for any sequence of graphs $(\G_n)$ conformally approximating a curved disk $\Sigma$, and near a point where
the curvature of the metric is nonzero: In a small neighborhood $U$ of such a point, the curvature is approximately constant and
the transfer impedance for $\eps\Phi$ will thus agree to first order with that in a similar small neighborhood of a point in $H_n$
and far from the boundary of $H_n$. 
Since $\P(f_1,f_2)$ is determined by the 
Green's function on the dual graph \cite{KKW}, 
for $f_1,f_2\in U$ we have a similar estimation of $\P_{LC^0}^n(E)$ as in the case for $H_n$ above.
\end{proof}

\subsubsection{Number of loops}\label{nbrofloops}

We show here that the number of loops has super-exponential decay for both $\P_{nonc}^n$ and $\P_{LC}^n$.

\begin{lemma}\label{decay} For
any sequence $(\G_n)_{n\ge1}$ of graphs conformally approximating a compact Riemannian surface $\Sigma$,
possibly with boundary, we have the following.
For any $0<\xi<1$, there exists $N$ and $K$ such that for all $n\geq N$ and for all $k\geq K$, we have
$$\P^n\left(\text{there are at least $k$ loops}\right)\leq \xi^k\,.$$
\end{lemma}

\begin{proof} 
Each loop created during the performance of the cycle-popping algorithm
either disconnects the surface or decreases the rank of the first homology. 
By the Markov property (Section~\ref{Markov}), the law of the conditional CRSF is obtained by independently sampling in each of the connected components.  

For $\P^n_{LC}$, suppose that we have created $k$ closed curves. Each of the complementary
components is either planar or non-planar. Each new loop found has a positive probability of being macroscopic, that is, either reduces
the rank of the first homology or removes definite area
from both resulting components, by Theorem \ref{nonzero}. Thus for $k$ sufficiently large, the curvature enclosed $c$ on either side 
will be small, hence with definite probability $1-p=1-o(1)$ the two sides will contain no further loops, by Corollary~\ref{noloopatall}. By taking $k$ large enough, and mesh size $1/n$ small enough, the curvature enclosed $c$ can be taken as small as needed such that $p<\xi$.

Thus the probability of an extra loop eventually decays exponentially with rate less than~$\xi$.

A similar argument works for $\P_{nonc}$.
\end{proof}

As an example, the distribution of loops for $\P_{nonc}$ with free boundary conditions was computed for an annulus using the standard square grid approximation in~\cite{Ke1}. For a cylinder of aspect ratio (height to circumference) $\tau$, the probability generating function of the number of loops is an elliptic function
$$\E^{free}_{\tau}(X^{\# loops})=X \prod_{j=1}^{\infty}\frac{X+2\cosh{(\pi j /\tau)}-2}{2\cosh{(\pi j /\tau)}-1}\,,$$
which can be checked to have a super-exponential tail.
For wired boundary conditions, and by planar duality (since the dual of an essential incompressible CRSF is a CRSF with free boundary conditions) the distribution is $\E^{wired}_{\tau}(X^{\# loops})=X^{-1}\E^{free}_{\tau}(X^{\# loops})$, which naturally also has a super-exponential tail.

\subsubsection{Microscopic loops}

The following theorem shows that loops of $\P^n_{LC}$ do not shrink to points as $n\to\infty$.

\begin{theorem}\label{nomicroloops}
Any subsequential limit of $\P^n_{LC}$ as $n\to\infty$ is supported on $\Omega$, 
that is, as $\alpha\to0$
\be\label{nopoint}\limsup_{n\to\infty} \P^n_{LC}\left(\text{there is a loop of area $\le\alpha$}\right)=o(1)\,.\ee
\end{theorem}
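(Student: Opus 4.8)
\emph{Strategy.} The plan is to prove \eqref{nopoint} by a first-moment estimate, turning the probability of a microscopic loop into a weighted second moment of the loop area and exploiting the fact that a cycle enclosing curvature $\theta_\gamma$ is penalized by the factor $2-2\cos\theta_\gamma\asymp\theta_\gamma^2$. Since the metric is smooth on a compact surface, its curvature $K$ is bounded and $\theta_\gamma=\int_{D_\gamma}K\,dA$ satisfies $\theta_\gamma^2\le\|K\|_\infty^2\,\mathrm{area}(\gamma)^2$, where $D_\gamma$ is the region enclosed by $\gamma$. Thus every loop of area $\le\alpha$ contributes a weight $\lesssim\alpha^2$, and the whole point is that this quadratic penalty should beat the combinatorial entropy of small loops.

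First I would bound, by the union bound,
\[
\P^n_{LC}\big(\exists\text{ loop of area}\le\alpha\big)\ \le\ \E^n_{LC}\big[\#\{\gamma\subset\Gamma:\ \mathrm{area}(\gamma)\le\alpha\}\big]\ =\ \sum_{\gamma:\,\mathrm{area}(\gamma)\le\alpha}\P^n_{LC}(\gamma\in\Gamma),
\]
the sum ranging over simple cycles of $\G_n$. It is cleanest to run the argument first for $\P^n_{LC^0}$, where there is a single loop and the normalization is explicit: writing $w_0(\gamma)$ for the probability that the loop of a \emph{uniform} CRST equals $\gamma$, and $\E_u$ for the corresponding expectation over the uniform-CRST loop, the measure $\mu_{LC^0}$ weights $\gamma$ by $\theta_\gamma^2$, so with $A:=\mathrm{area}(\gamma)$,
\[
\P^n_{LC^0}\big(A\le\alpha\big)=\frac{\E_u[\theta^2\,\one_{A\le\alpha}]}{\E_u[\theta^2]}\ \le\ C\,\frac{\E_u[A^2\,\one_{A\le\alpha}]}{\E_u[A^2]},
\]
where the upper bound $\theta^2\le\|K\|_\infty^2A^2$ controls the numerator, and the comparability $\E_u[\theta^2]\asymp\E_u[A^2]$ of the denominators follows from the fact that near a point of nonzero curvature $\theta^2\asymp A^2$ and that, by \cite{KKW} (as used in the proof of Theorem~\ref{nonzero}), the second moment of the area is of a definite order dominated by macroscopic loops, which see the curvature. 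The statement for $\P^n_{LC}$ then follows either by the same first-moment computation with the uniform CRSF as reference — its single-loop marginal has the identical local structure and is controlled by the same transfer-impedance inputs — or, when $\Sigma$ is contractible, by the absolute continuity between $\P_{LC^0}$ and $\P_{LC}$ already invoked in Theorem~\ref{nonzero}.

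It therefore remains to prove the single estimate
\[
\E_u\big[A^2\,\one_{A\le\alpha}\big]=o(\alpha)\,\E_u[A^2]\qquad(\alpha\to0,\ \text{uniformly in }n),
\]
i.e. that loops of area $\le\alpha$ carry only an $o(\alpha)$-fraction of the second moment of the loop area. This is the main obstacle. The naive bound $A^2\one_{A\le\alpha}\le\alpha\,A$ only yields an $O(\alpha)$-fraction, since the expected loop area is itself of order $\E_u[A^2]$ divided by the system size; to gain the extra factor one must control the \emph{density} of the loop area at every mesoscopic scale between the lattice and $\alpha$. Here the local structure enters: the cycle is locally a loop-erased random walk, hence locally $\SLE_2$, and I would combine the transfer-impedance asymptotics of \cite{KKW} (which, as emphasized there, depend only on the Green function and so transfer to any conformally approximating $\G_n$ and to the curved setting) with the $\SLE_2$ scaling exponents to show that the area-density of the reference loop grows slower than the critical rate, so that the contribution of scales $\le\alpha$ to $\E_u[A^2]$ is $o(\alpha)$ of the whole. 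Establishing this density bound uniformly down to the lattice scale — beating the entropy of microscopic and mesoscopic loops simultaneously by the $\theta^2$ penalty — is the delicate point; once it is in place \eqref{nopoint} follows, so that every subsequential limit of $\P^n_{LC}$ is supported on genuine multiloops.
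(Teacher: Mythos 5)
Your proposal reduces the theorem to the estimate $\E_u\big[A^2\,\one_{A\le\alpha}\big]=o(\alpha)\,\E_u[A^2]$ uniformly in $n$, and this estimate is never proved: you yourself flag it as ``the main obstacle'' and ``the delicate point''. But that estimate \emph{is} the theorem in barely disguised form --- the soft inputs you do supply (the bound $\theta_\gamma^2\le\|K\|_\infty^2A^2$, the union bound, the comparison of denominators) only yield the trivial $O(\alpha)$ rate, and the passage from $O(\alpha)$ to $o(\alpha)$ is exactly where one must beat the entropy of microscopic and mesoscopic loops. Invoking ``$\SLE_2$ exponents plus transfer-impedance asymptotics'' is a research program, not an argument; no density bound on the loop area at scales between the lattice and $\alpha$ is formulated, let alone established uniformly in $n$. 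There is also a second, more localized error: the claimed transfer from $\P^n_{LC^0}$ to $\P^n_{LC}$ by ``absolute continuity'' goes the wrong way. In Theorem~\ref{nonzero} absolute continuity transfers a \emph{lower} bound on the probability of an event (which needs only $\P_{LC^0}\ll\P_{LC}$); here you need an \emph{upper} bound on a bad event under $\P_{LC}$, which would require $\P_{LC}\ll\P_{LC^0}$ with bounded density --- false, since $\P_{LC}$ charges multi-loop configurations that have $\P_{LC^0}$-probability zero. Similarly, under $\mu_{LC}$ the marginal law of an individual cycle is not a $(2-2\cos\theta)$-reweighting of the uniform-CRST loop: the probability that $\gamma$ occurs as a cycle involves the CRSF partition function of the complementary domain, so your clean normalization identity does not carry over from the single-loop measure.

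For comparison, the paper avoids any quantitative entropy-versus-penalty estimate by a soft contradiction argument: if loops of area $\le\delta^2$ appeared with probability $\ge p$ bounded away from $0$, then the spatial Markov property together with the cycle-popping algorithm allows one to iterate inside the complementary components and produce $1/\delta$ such loops with probability at least $q^{1/\delta}$ for some $0<q<p$; but Lemma~\ref{decay} gives, for any $\xi<q$, the bound $\P^n_{LC}\left(\text{at least }k\text{ loops}\right)\le\xi^k$ for $n,k$ large, and taking $k=1/\delta$ forces $q^{1/\delta}\le\xi^{1/\delta}$, a contradiction. That argument needs no control of the area distribution at small scales, which is precisely the control your approach is missing.
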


\begin{proof}
Let us argue by contradiction.  
If we suppose that~\eqref{nopoint} is false, it means that there exists $p>0$ and arbitrarily small values of $\delta>0$ such that for $n$ large enough, we have
$$\P^n_{LC}\left(\text{there is a loop with area $\leq \delta^2$}\right)>p\,.$$

A small-area loop has a small diameter; conditioning on this loop in particular does not change
the transfer impedance operator far from that loop. In fact from Lemma \ref{locallyLERW}
 below, the loops
are absolutely continuous with respect to the loop-erased walk; as such their
probability of intersecting a small disk tends to zero with the disk's diameter, since the same is true for the loop-erased walk. Thus the 
conditioning has negligible effect.

So we can expect to find many loops: there is a $0<q<p$ such that,
dividing $\Sigma$ into regions of diameter $\sqrt{\delta}$,
the probability that there is a loop in each of these $\Theta(1/\delta)$ regions is on the order of $q^{1/\delta}$. Thus
$$\P^n_{LC}\left(\text{there are $1/\delta$ loops with area $\leq \delta^2$}\right)>q^{1/\delta}\,.$$
However, by Lemma \ref{decay} above, 
there exists $0<\xi<q$ such that for any $k$ and $n$ large enough, we have
$$\P^n_{LC}\left(\text{there are $\geq k$ loops}\right)\leq \xi^k\,.$$
By taking $k=1/\delta$ we obtain that for arbitrarily small values of $\delta>0$, there is a large enough $n$ such that 
$$0<q^{1/\delta}<\P^n_{LC}\left(\text{there are $1/\delta$ loops with area $\leq \delta^2$}\right)\leq \xi^{1/\delta}\,.$$
This yields a contradiction since the right-hand side of the last equation tends to zero faster than the left-hand side when $\delta\to 0$.
\end{proof}

\subsubsection{Resampling and tightness}
We will show tightness of the sequence of measures $\P^n_{nonc},\P^n_{LC^0}$ and $\P^n_{LC}$. This will yield the existence of subsequential limits. 

We show in fact that the scaling limits of the macroscopic loops are absolutely continuous with respect to $\SLE_2$. For this we establish the link to the loop-erased random walk (LERW).

\begin{lemma}[Link to LERW]\label{locallyLERW}
Let $\eta$ be a simple path from vertices $\eta^+$ to $\eta^-$. The law of the cycle $\gamma$ 
of a uniform CRST, conditional on the fact that it contains $\eta$, is ($\eta$ followed by) the 
LERW from $\eta^+$ to $\eta^-$ with wired boundary conditions along $\eta$.

For the measures $\P_{LC}, \P_{LC^0}$ or $\P_{\text{nonc}}$, conditional on all other components, the law of $\gamma$
is asymptotically that of ($\eta$ followed by) the LERW from $\eta^+$ to $\eta^-$ with wired boundary conditions along $\eta$,
biased by the cycle weight of $\gamma$.
\end{lemma}

\begin{proof}
Let $\gamma$ be a loop in a $\P^n_{LC^0}$-,$\P^n_{LC}$- or $\P^n_{nonc}$-random CRST, 
and $a,b\in\gamma$ distinct points on it. Let $\gamma[a,b]$ be the part of $\gamma$ 
counterclockwise
between $a$ and $b$, and $\gamma[b,a]$ the complementary part. If we erase $\gamma[a,b]$,
we can define a new loop $\gamma'$ by taking a LERW from $a$ to $b$ in the domain
defined by $\Sigma\cup\gamma[b,a]$, with wired boundary conditions on $\gamma[b,a]$, and conditioning on the LERW
to end at $b$. The union of this LERW and $\gamma[b,a]$ is the simple closed curve $\gamma'$. 
By the sampling algorithm, this curve $\gamma'$ is absolutely continuous with  $\gamma[a,b]$,
with Radon-Nikodym derivative given by the ratio of the cycle weights.
If $a$ and $b$ are close to each other, the
LERW from $a$ to $b$ will with high probability not exit a small ball around $[a,b]$. Hence the cycle weight of
$\gamma'$ will be close to that of $\gamma$. 

Wilson's algorithm thus shows that
this is a fair sample of $\P$ conditioned on $\gamma[b,a]$. Thus the LERW from $a$ to $b$ with the appropriate
boundary conditions is absolutely continuous with respect to $\gamma[a,b]$, with a bound independent of mesh size~$1/n$.
\end{proof}

This proves that in the scaling limit, for any converging subsequence, the loops are locally absolutely continuous with respect to the scaling limit of LERW, which was shown in \cite{LSW} to be $\SLE_2$.

In particular the scaling limit, for any converging subsequence, is supported on simple curves (recall that $\SLE_2$ curves are simple, and note that
this is a local property). This tightness property of LERW was proved in~\cite{AB,ABNW,Sc,LSW}.

\begin{theorem}\label{tightness}
The sequences $\left(\P^n_{nonc}\right)_{n\geq 0}$ and $\left(\P^n_{LC}\right)_{n\geq 0}$ are tight on $\Omega$.
\end{theorem}

The above argument also shows that the sequence $(\P^n_{LC^0})_{n\ge 0}$ is tight, 
provided we allow for the possibility that the curve shrinks to a point.
We need to add to $\Omega_1$ a copy of $\Sigma$ whose points represent the constant maps of $S^1$ to that point.
The metric naturally extends to this augmented space $\Omega_1^*$, so that 
the limit of a sequence of curves shrinking to a point is the constant
curve at that point. Let $\Omega_1^*$ be this augmented space.

\begin{theorem}\label{tightness0}
The sequence $\left(\P^n_{LC^0}\right)_{n\geq 0}$ is tight on $\Omega_1^*$.
\end{theorem}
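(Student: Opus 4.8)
The plan is to establish tightness of $(\P^n_{LC^0})$ on the augmented space $\Omega_1^*$ by verifying the standard criterion: every subsequence admits a further subsequence converging weakly to a probability measure on $\Omega_1^*$. Since $\Omega_1^*$ is a separable metric space, by Prokhorov's theorem it suffices to show that the family is uniformly tight, i.e.\ for every $\eta>0$ there is a compact set $K\subset\Omega_1^*$ with $\P^n_{LC^0}(K)\ge 1-\eta$ for all $n$. The key point is that $\Omega_1^*$ is much better behaved than $\Omega_1$: by adjoining the constant maps we have removed the only source of non-precompactness coming from curves shrinking to a point, so I expect compact sets to be characterizable by a uniform modulus-of-continuity (equicontinuity) condition together with the fact that the single loop has no microscopic escape behavior.

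First I would invoke the resampling/absolute-continuity argument already carried out in the text preceding the statement: by Wilson's algorithm, for points $a,b\in\gamma$ close together the part $\gamma[a,b]$ of a $\P^n_{LC^0}$-loop is absolutely continuous with respect to a LERW from $a$ to $b$ with the appropriate wired boundary conditions, with a Radon--Nikodym bound \emph{independent of the mesh} $1/n$. Since the scaling limit of LERW is $\SLE_2$ (by \cite{LSW}), this gives uniform control on the local geometry of the loop. Concretely, I would use this to produce a uniform modulus of continuity for a suitable parameterization of the loop: the probability that the loop has an arc of small diameter that nonetheless travels a long way (a ``hook'' or near-self-touching at small scale) is controlled uniformly in $n$ by the corresponding $\SLE_2$ estimate, up to the bounded Radon--Nikodym factor. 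This is precisely the input needed to exhibit, for each $\eta$, a set of uniformly equicontinuous curves carrying mass $\ge 1-\eta$.

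Next I would assemble the compact set. A subset of $\Omega_1^*$ consisting of curves admitting a common modulus of continuity (with the constant map treated as the degenerate limit) is precompact by an Arzelà--Ascoli argument in the metric $d(f,g)=\inf_\alpha\sup_t\|f(t)-g(\alpha(t))\|$; the augmentation by constant maps guarantees that limit points of equicontinuous families that contract stay inside $\Omega_1^*$ rather than leaving the space. Combining the uniform modulus of continuity from the LERW/$\SLE_2$ comparison with this precompactness yields the required uniformly tight family, hence tightness on $\Omega_1^*$.

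The main obstacle I anticipate is converting the purely local absolute-continuity statement with respect to $\SLE_2$ into a \emph{global} equicontinuity bound valid uniformly in $n$. The LERW comparison only controls $\gamma[a,b]$ for $a,b$ close, and the Radon--Nikodym bound, while mesh-independent, must be pieced together over a cover of the loop by many short arcs without the constants degenerating as the arcs shrink or the number of arcs grows. Handling this patching carefully — ensuring the boundary conditions at the endpoints of successive arcs do not destroy the uniformity, and ruling out a small number of uncontrolled long-range excursions — is where the real work lies; the rest is a standard Prokhorov/Arzelà--Ascoli packaging. I would also note that, in contrast to Theorem~\ref{tightness} for $\P^n_{inc}$ where noncontractibility forces macroscopic loops, here one genuinely needs the augmented space $\Omega_1^*$ precisely because the argument permits the loop to degenerate to a point in the limit.
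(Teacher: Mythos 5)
Your proposal follows essentially the same route as the paper: the paper proves this theorem by exactly the resampling argument you invoke (Wilson's-algorithm absolute continuity of $\gamma[a,b]$ with respect to LERW, mesh-independent Radon--Nikodym bound, $\SLE_2$ as the LERW scaling limit from \cite{LSW}), combined with the augmentation to $\Omega_1^*$ precisely so that curves degenerating to points remain in the space. In fact your write-up is more detailed than the paper's, which simply asserts that ``the above argument also shows'' tightness; the Arzel\`a--Ascoli packaging and the patching issue you flag are left implicit there.
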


\subsection{Probabilities of cylindrical events}\label{coeff}

\subsubsection{Flat connections}

Let $\Sigma$ be a compact surface with $b>0$ boundary components.
Let $x_1,\ldots,x_k\in\Sigma$. 
Let $\M$ be the space of flat $\SU$-connections modulo gauge transformations on $\Sigma\setminus\{x_1,\ldots,x_k\}$. 
This space is compact. 
Such a flat connection is determined uniquely by a homomorphism from $\pi_1\left(\Sigma\setminus\{x_1,\ldots,x_k\}\right)$ into $\SU$ modulo conjugation. 

Provided $b+k>0$, the fundamental group $\pi_1(\Sigma\setminus\{x_1,\dots,x_k\})$ is a free group $F_m$ 
on $m=2g+k-1+b$ letters, where $g$ is the genus
and $b$ the number of boundary components of $\Sigma$. Thus a flat connection is 
determined by $m$ arbitrary elements of $\SU$, one for each generator of $\pi_1$. 
 
There is no canonical basis for $\pi_1(\Sigma\setminus\{x_1,\dots,x_k\})$ and hence for $\M$. For any choice of a basis, we consider the measure on $\M$ obtained by the image of the Haar measure on $\SU^m$. It can be seen, using a theorem of Nielsen (see~\cite{LS}, Chapter~$1$), that the measure is independent of that choice of basis. (This theorem states that one can go from one basis to another in a free group by a sequence of elementary moves. It is easy to check that these moves preserve the Haar measure.) We let $\nu$ be this measure and call it the canonical Haar measure on $\M$.

\subsubsection{Trivalent graph}
A useful device, see \cite{FG}, is to define a trivalent graph $H$ in $\Sigma$ (unrelated to $\G$) 
with a single boundary component of $\Sigma$ or $x_i$ in each face,
so that $H$ is a deformation retract of $\Sigma\setminus\{x_1,\dots,x_k\}$. We can think of $\Sigma$ as a ribbon graph structure on $H$.

Recall that a \emph{finite lamination} $L$ of $\Sigma$ is an isotopy class of a finite number of disjoint noncontractible
simple closed curves.
A lamination $L$ retracts to a ``multicurve with multiplicity" on $H$; $L$ is determined by, for each edge of $H$, a nonnegative
integer giving the number of strands of a minimal representative of $L$ retracted onto that edge. These integers satisfy
the conditions that at each vertex of $H$ the sum of the three integers is even and the three integers
satisfy the triangle inequality: see Figure~\ref{trivalent}. Moreover any set of integers satisfying these two conditions
arises from a unique lamination.

\begin{figure}[ht]
\centering
\includegraphics[width=10cm]{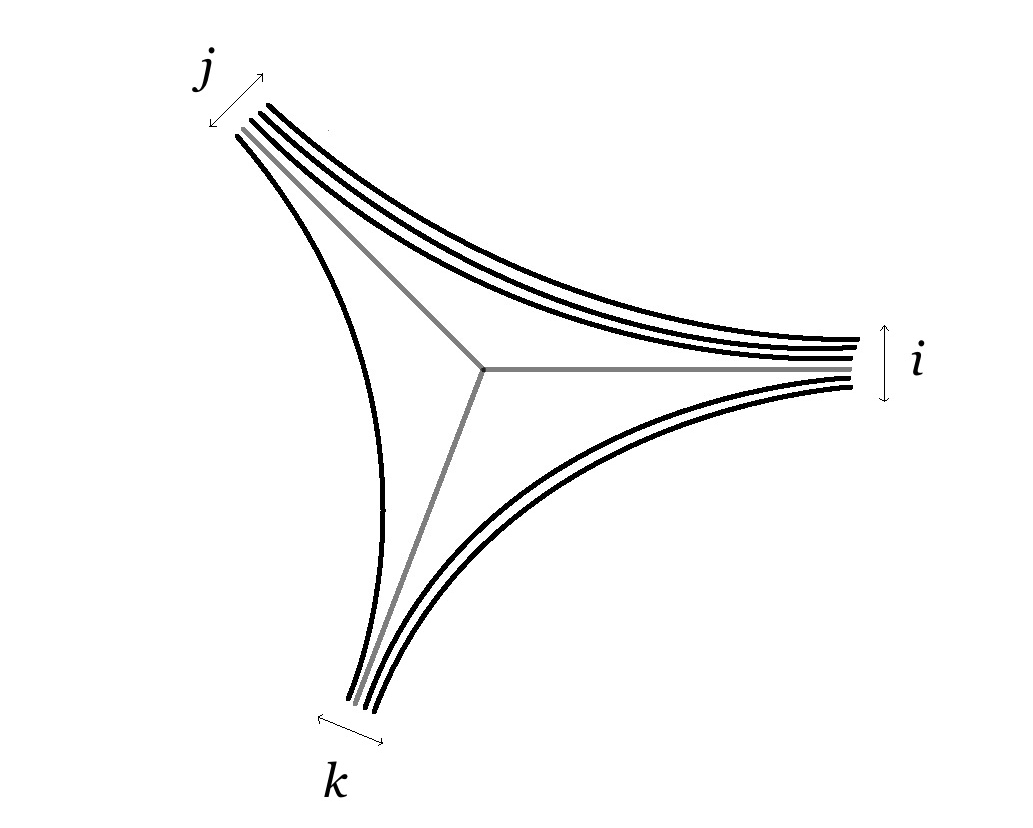}
\caption{Strands of the lamination at a vertex of the trivalent graph; the integers $i,j,k$ (here $5,4,3$)
satisfy $i+j+k=0\mod 2$ and the triangle inequality.}
\label{trivalent}
\end{figure}

We define a partial order on laminations: $L\leq L'$ if the integers on edges of $H$ associated to $L$ are
all less than or equal to those of $L'$. We define the \emph{complexity} $n(L)$ of $L$ to be the sum of these integers.

An $\SU$-connection on $H$ determines a flat connection on $\Sigma$. After gauge transformation
one can take the $\SU$-connection to be the identity on all edges of a spanning tree of $H$. 

\subsubsection{Integrals over the space of flat connections}\label{integrals}
Given a finite weighted graph $\G$ embedded in $\Sigma$, and $\Phi\in\M$ define
\be\label{Zphi}
Z(\Phi)= \sum_{\text{CRSFs}}\prod_{\text{edges}}c(e)\prod_{\text{cycles}~\gamma}\left(2-\Tr(\omega_\gamma)\right),\ee
where
$\omega_\gamma$ is the holonomy of the connection $\Phi$ around the cycle $\gamma$. The function 
$Z$ is a real-valued function on $\M$. 

We denote $$Z_0=\sum_{\text{nonc. CRSFs}}\,\prod_{\text{edges}}c(e),$$ the partition function for all noncontractible CRSFs, without cycle weight. (This is \emph{not} the same as $Z(\text{Id})$, which is zero.)

For a flat connection $\Phi$ we may rewrite (\ref{Zphi}) as 
$$Z(\Phi)=\sum_{L}X_{L}T_{L}(\Phi),$$
where the sum is over finite laminations $L$,
where $X_{L}$ is the conductance-weighted sum of CRSFs whose cycles are isotopic to $L$, and 
$$T_{L}(\Phi)=\prod_{\gamma\in L}\left(2-\Tr(\omega_\gamma)\right)$$ 
is a real-valued function on $\M$.

Fock and Goncharov \cite{FG} proved that,
seen as real-valued functions on $\M$, the functions $S_{L}=\prod_{\gamma\in L}\Tr(\omega_\gamma)$ are linearly independent and generate the vector space of regular (polynomial)
functions on $\M$, when $L$ runs through all finite laminations.
Hence the functions $T_{L}=\prod_{\gamma\in L}\left(2-\Tr(\omega_\gamma)\right)$ are also linearly independent,
generate the same vector space
and, when the bases are ordered by increasing number of cycles, 
the change of basis matrix $M_{S,T}$ is an invertible infinite triangular matrix. 

Choose an ordering of the $T_{L}$ consistent with the partial order on laminations defined above.
Let ${\bf P}=\{P_L(\Phi)\}$ be the Gram-Schmidt orthonormalization of the ${\bf T}=\{T_L(\Phi)\}$ 
with respect to this ordering
and with respect to the inner product $\langle f,g\rangle = \int_{\M} fg \,d\nu.$ 
Let ${\bf A}=(A_{L,L'})$ be the infinite lower-triangular matrix such that ${\bf P}={\bf AT}$. 

Recall the linear operator $\Delta_\Phi$ on the total space $\C^{2|V|}$ of the $\C^2$-bundle on $\G$. 
\begin{theorem}[\cite{Ke1}]\label{qdet}
We have
$$Z(\Phi)=\sqrt{\det(\Delta_\Phi)}.$$
\end{theorem}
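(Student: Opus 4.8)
The plan is to exhibit $\det(\Delta_\Phi)$ as a perfect square and to identify its square root with $Z(\Phi)$, exploiting that $\SU=\mathrm{Sp}(1)$ is the group of unit quaternions, so that the $\C^2$-bundle is a quaternionic line bundle and $\Delta_\Phi$ is quaternionic-Hermitian. Concretely, let $\epsilon=\begin{pmatrix}0&1\\-1&0\end{pmatrix}$ be the standard symplectic form on each fiber $\C^2_v$, and let $J=\bigoplus_{v\in V}\epsilon$ be the induced antisymmetric form on the total space $W=\C^{2|V|}$. First I would show that $J\Delta_\Phi$ is antisymmetric. On the diagonal this is clear, since $\Delta_{vv}=d_v I_2$ is scalar (with $d_v=\sum_{v'\sim v}c(vv')$) and $d_v\epsilon$ is antisymmetric. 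On an edge $vv'$ the block is $\Delta_{vv'}=-c(vv')\varphi_{v'v}$, and antisymmetry of $J\Delta_\Phi$ reduces to the identity $\varphi_{vv'}^{T}\epsilon\,\varphi_{vv'}=\epsilon$. For $2\times2$ matrices one has $g^{T}\epsilon g=(\det g)\,\epsilon$, so this holds precisely because $\det\varphi_{vv'}=1$, i.e.\ because the connection is $\SU$-valued (unitarity gives $\varphi_{v'v}=\varphi_{vv'}^{-1}$, and determinant one gives the symplectic relation).

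Since $\det\epsilon=1$ we have $\det J=1$, hence $\det(\Delta_\Phi)=\det(J\Delta_\Phi)=\Pf(J\Delta_\Phi)^2$. This already shows that $\det(\Delta_\Phi)$ is a perfect square; it remains to prove $\Pf(J\Delta_\Phi)=\pm Z(\Phi)$. Equivalently, $\Pf(J\Delta_\Phi)$ is the quaternionic determinant $\Qdet(\Delta_\Phi)$, and the remaining content is a quaternionic analogue of the matrix-tree theorem.

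To evaluate the Pfaffian I would expand it as the signed sum over pair-partitions of the index set $V\times\{1,2\}$. Because the entries of $J\Delta_\Phi$ vanish away from the diagonal blocks and the edge blocks, a pair-partition contributes only if each pair either joins the two fiber coordinates over a single vertex (using $d_v\epsilon$) or joins fiber coordinates across an edge (using $-c(vv')\epsilon\varphi_{v'v}$). As in the classical proof of the matrix-tree theorem, expanding each $d_v=\sum_{v'\sim v}c(vv')$ and collecting terms with their Pfaffian signs, the surviving configurations are indexed by CRSFs: the edge-pairings trace out the cycles, while the remaining structure assembles the conductance-weighted spanning forest, producing the common factor $\prod_{e}c(e)$. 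Around each cycle $\gamma$ the two fiber strands can be routed in the two cyclic senses; summing these contributions and using $g^{T}\epsilon g=\epsilon$ replaces the bare monodromy $\omega_\gamma$ by $2-\Tr(\omega_\gamma)$ (note $\det(I_2-\omega_\gamma)=2-\Tr\omega_\gamma$ for $\omega_\gamma\in\SU$, and the two senses give the same trace). Assembling, $\Pf(J\Delta_\Phi)=\pm\sum_{\text{CRSFs}}\prod_e c(e)\prod_\gamma\left(2-\Tr\omega_\gamma\right)=\pm Z(\Phi)$, and squaring gives the theorem.

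I expect the main obstacle to be exactly this last combinatorial identification, and within it the sign bookkeeping: one must check that the Pfaffian signs of the matchings, together with the cancellations coming from the diagonal degree terms, reproduce the CRSF expansion with no spurious surviving terms, and that it is the \emph{trace} of the monodromy (rather than the monodromy itself) that appears. A clean way to organize this is to mirror the permutation-expansion proof of the classical matrix-tree (Forman) theorem at the level of $\Qdet$, so that the single-cycle case (where $\det\Delta_\Phi=(2-\Tr\omega)^2$ by a direct eigenvalue computation) and disjoint unions serve as base cases and the general case follows by the same deletion/contraction bookkeeping, now carrying the $2\times2$ fiber and its symplectic form $\epsilon$.
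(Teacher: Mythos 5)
Your plan is, in architecture, exactly the proof that this paper relies on: the paper does not prove Theorem~\ref{qdet} itself but imports it from~\cite{Ke1}, where the argument is (a) the Laplacian of a determinant-one connection is self-dual quaternionic, equivalently $J\Delta_\Phi$ is antisymmetric for the block-diagonal symplectic form $J$; (b) hence $\det\Delta_\Phi=\Pf(J\Delta_\Phi)^2=\Qdet(\Delta_\Phi)^2$ (Dyson's theorem); (c) a Forman-style expansion of $\Qdet(\Delta_\Phi)$ yields the CRSF sum $Z(\Phi)$. Your half (a)--(b) is complete and correct: the identity $g^{T}\epsilon g=(\det g)\,\epsilon$ for $2\times2$ matrices reduces antisymmetry to $\det\varphi_e=1$, and squaring kills the sign ambiguity in the Pfaffian.

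The gap is in step (c), which is where essentially all the content of the theorem sits, and your sketch of it is not merely unfinished but misassigns where the weight $2-\Tr(\omega_\gamma)$ comes from. In the pair-partition expansion of $\Pf(J\Delta_\Phi)$, a graph cycle $\gamma$ of length $k$ realized by matched pairs along its edges admits $2^{k}$ fiber routings (a binary choice at each vertex, since the two fiber indices there must be split between the two incident pairs), and the signed sum of all of these produces $-\Tr(\omega_\gamma)$ alone --- this is the half-trace structure in Dyson's cycle expansion of $\Qdet$ --- not $2-\Tr(\omega_\gamma)$. The $+2$ arises, exactly as in Forman's proof, from the configurations in which the edges of $\gamma$ come from expanding the diagonal degree entries $d_v\epsilon$, together with the doubled-edge configurations (both pairs lying over a single edge), each of weight $-\det(\epsilon\varphi_e)=-1$; these doubled edges are precisely the ``spurious'' terms your sketch never addresses, and they do not cancel --- they are needed. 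A sanity check on a triangle with unit conductances: the all-diagonal matching gives $2^3=8$, the three matchings consisting of a self-matched vertex plus a doubled edge give $3\times 2\times(-1)=-6$, and the eight matched $6$-cycles sum to $-\Tr(\omega)$, for a total of $2-\Tr(\omega)$ as required. So what you still owe is the full cancellation bookkeeping identifying $\Pf(J\Delta_\Phi)$ with $\pm Z(\Phi)$; the clean route, and the one taken in~\cite{Ke1}, is to quote or reprove Dyson's theorem $\det=\Qdet^2$ together with the cycle expansion of $\Qdet$, and then run Forman's permutation argument with half-traces of monodromies in place of monodromies. As written, your argument shows that $\det(\Delta_\Phi)$ is a perfect square, but not that its square root is $Z(\Phi)$.
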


One can extract the coefficients of any desired lamination $L$ as follows.
\begin{lemma}\label{cylinders}
For any cylindrical event $E_{L}$, we have 
$$\mu_{nonc}\left(E_{L}\right)=\sum_{L'\ge L} A_{L',L}\int_{\M}\frac{Z(\Phi)}{Z_0}P_{L'}(\Phi)d\nu.$$
\end{lemma}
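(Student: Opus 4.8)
The plan is to reduce the statement to finite linear algebra in the lamination basis, using the two ingredients already in place: the expansion $Z(\Phi)=\sum_L X_L T_L(\Phi)$ and the Fock--Goncharov linear independence of the $T_L$ together with the lower-triangular passage to the orthonormal family $\mathbf P=\mathbf{AT}$.

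First I would identify the left-hand side combinatorially. Because $\nabla$ is flat, a contractible cycle has trivial monodromy, so $2-\Tr(\omega_\gamma)=0$ and the sum defining $Z(\Phi)$ receives contributions only from incompressible CRSFs. Grouping these by the isotopy class of their cycle set yields the (finite) sum $Z=\sum_L X_L T_L$, with $X_L$ the conductance-weighted number of incompressible CRSFs whose cycles are isotopic to $L$. Since the cylindrical event $E_L$ is exactly the event that the cycle set is isotopic to $L$, and $Z_0=\sum_L X_L$ is the total weight, the probability $\mu_{inc}(E_L)$ equals the ratio $X_L/Z_0$. Thus it suffices to prove $X_L=\sum_{L'\ge L}A_{L',L}\int_\M Z(\Phi)\,P_{L'}(\Phi)\,d\nu$.

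Next I would expand $Z$ in the orthonormal basis $\mathbf P=\{P_{L'}\}$. Since $\M=\SU^m/\mathrm{conj}$ is compact and each $T_L$ is a bounded continuous (polynomial) function, $Z\in L^2(\M,\nu)$, its Fourier coefficients are $c_{L'}:=\langle Z,P_{L'}\rangle=\int_\M Z(\Phi)P_{L'}(\Phi)\,d\nu$, and $Z=\sum_{L'}c_{L'}P_{L'}$. Substituting $\mathbf P=\mathbf{AT}$, that is $P_{L'}=\sum_{L''}A_{L',L''}T_{L''}$, and comparing the two expansions $Z=\sum_{L''}X_{L''}T_{L''}=\sum_{L''}\bigl(\sum_{L'}c_{L'}A_{L',L''}\bigr)T_{L''}$, the linear independence of the $T_L$ forces $X_L=\sum_{L'}c_{L'}A_{L',L}$. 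Because the Gram--Schmidt process respects the partial order, $\mathbf A$ is lower triangular ($A_{L',L}=0$ unless $L\le L'$), so the sum runs over $L'\ge L$ only; dividing by $Z_0$ gives the claim.

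I do not expect a genuine obstacle here; the points that merit care are well-definedness and finiteness rather than any hard estimate. The orthonormalization is legitimate because the $T_L$ stay linearly independent in $L^2(\M,\nu)$: they are linearly independent continuous functions and $\nu$ has full support. Finiteness is automatic for a fixed graph, since only finitely many $X_L$ are nonzero; then, using that $\langle T_{L''},P_{L'}\rangle=(\mathbf A^{-1})_{L'',L'}$ vanishes unless $L'\le L''$ (as $\mathbf A^{-1}$ is again lower triangular), only finitely many $c_{L'}$ are nonzero, so every displayed sum is finite and the infinite triangular matrices $\mathbf A,\mathbf A^{-1}$ act only on finitely supported vectors. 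The step deserving the most explicit justification, rather than difficulty, is the identification $\mu_{inc}(E_L)=X_L/Z_0$: one must note that the contractible-cycle terms of $Z$ drop out by flatness and that the surviving terms are organized by isotopy type precisely as the cylindrical events require.
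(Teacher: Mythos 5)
Your proof is correct and takes essentially the same route as the paper's: both start from the identification $\mu_{inc}(E_L)=X_L/Z_0$, expand $Z=\sum_L X_L T_L$, and extract $X_L$ via the orthonormality of $\mathbf{P}=\mathbf{A}\mathbf{T}$ together with the lower-triangularity of $\mathbf{A}$ (your coefficient-matching by linear independence is just the paper's matrix identity $\mathbf{X}^t=\int_{\M}Z\,\mathbf{P}^t\mathbf{A}\,d\nu$ read in reverse). The additional points you spell out --- flatness killing contractible cycles so that $Z$ is supported on incompressible CRSFs, and finiteness of all the sums --- are left implicit in the paper but are exactly the right justifications.
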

This sum is finite for any finite graph.

\begin{proof}
The probability of $E_{L}$ is $X_{L}/Z_0$. Write 
$$Z(\Phi)=\sum_{L}X_{L}T_{L}={\bf X}^t{\bf T}(\Phi)={\bf X}^t{\bf A}^{-1}{\bf P}(\Phi)\,.$$
Since ${\bf P}$ is orthonormal, we have $\int_{\M} {\bf P}^t{\bf P}d\nu=\text{Id}$. Hence, 
\begin{eqnarray*}\int_{\M}Z{\bf P}^t{\bf A}d\nu&=&\int_{\M}{\bf X}^t{\bf A}^{-1}{\bf P}(\Phi){\bf P}^t(\Phi){\bf A}d\nu\\
&=&{\bf X}^t{\bf A}^{-1}\left(\int_{\M}{\bf P}(\Phi){\bf P}^t(\Phi)\,d\nu\right){\bf A} ={\bf X}^t\,.
\end{eqnarray*}
Hence, $${\bf X}=\int_{\M}Z{\bf A}^t{\bf P}d\nu\,.$$
Each entry $X_{L}$ is the integral
$$X_{L}=\sum_{L'\ge L} A_{L',L}\int_{\M}Z(\Phi)P_{L'}(\Phi)d\nu.$$
Dividing by $Z_0$ we obtain the result.
\end{proof}

\subsection{Convergence in the flat case}\label{flatconvergence}

We first consider $\Phi$ to be a flat
connection. Associated to this is a measure $\mu^n_{\Phi}$ on noncontractible
CRSFs. Since $\mu_{\Phi}^n$ has a density (independent of the graph) with respect to $\mu^n_{nonc}$, it suffices to show that this latter converges.

The main tool is the following convergence result. Let $x_1,\dots,x_k$ be points of $\Sigma$ and $B_j$  a
small ball around $x_j$. Let $\Phi$ be a flat
connection on $\Sigma\setminus\{B_1\cup\dots\cup B_k\}$.

\begin{theorem}\label{limit}\label{Z/Z}
There exists a function $H\in \mathrm{L^2}(\M^2)$ depending
only on the conformal type of the surface $\Sigma'=\Sigma\setminus\{B_1\cup\dots\cup B_k\}$ such that for any $\Phi'$ not gauge-equivalent to the identity, we have
$$\frac{Z(\Phi)}{Z(\Phi')}\to H(\Phi,\Phi')\,.$$
There exists a bounded
function $F$ on $\M$ depending
only on the conformal type of the surface $\Sigma'=\Sigma\setminus\{B_1\cup\dots\cup B_k\}$ such that 
$$\frac{Z(\Phi)}{Z_0}\to F(\Phi)\,.$$
\end{theorem}
\begin{proof}
The first statement is proved in the same way as Theorem \ref{universality}.

For the second statement, let $Z_0=\sum_{L}X_L$ be the total number of CRSFs.
By the first statement, $\frac{Z_0}{Z_{\Phi}}$ converges for any $\Phi\neq 1$. Now consider its inverse.

We write
\begin{equation}\label{sum}\frac{Z_\Phi}{Z_0}=\sum_{L}\frac{X_L}{Z_0} T_L\,.\end{equation}
First, $T_L\leq e^{O(n(L))}$ because $2-\Tr(\omega)$ is uniformly bounded by a constant over $\M$. Secondly, by Lemma~\ref{decay}, $X_K\le e^{-cn(K)}Z_0$ for arbitrarily small $c>0$. Since there are at most
$n(K)^M$ laminations $L$ with complexity $n(L)=n(K)$, where $M$ is the number of edges of $H$, 
the sum~\eqref{sum} is bounded by a convergent series. 
\end{proof}

\begin{theorem}\label{noncontractible}
Let $\Sigma$ be a compact non-simply connected Riemann surface.
There is a conformally
invariant measure $\P_{nonc}$ on $(\Omega,\B)$ supported on noncontractible multicurves, such that for
any sequence $(\G_n)_{n\ge1}$ of graphs conformally
approximating $\Sigma$,
the measures $\P_{nonc}^{n}$ on noncontractible CRSFs of $\G_n$ converge to $\P_{nonc}$. 
\end{theorem}

The main result of \cite{Ke2} shows that the homotopy classes on $\Sigma$ of the noncontractible loops
have a conformally invariant limit distribution.

\begin{proof}
Take points $z_1,\dots,z_k$ in $\Sigma$, take $\delta>0$ small, and for each $i$
let $B_i$ be the ball of small radius $\delta$ around $z_i$. 
Let $\G^n_B=\G^n\setminus\{B_1\cup\dots\cup B_k\}$ and $\P^n_B$ the associated measure on 
multiloops of noncontractible CRSFs on $\G_n$ whose loops stay in $\G^n_B$.

Consider $L$ a finite lamination in $\Sigma\setminus\{B_1\cup\cdots\cup B_k\}$, which has no peripheral curves
(no curves isotopic to one of the boundary curves $\partial B_i$). It can also be thought of as a lamination of $\G^n_B$.
Let $E_L$ be the event that a CRSF of $\G^n_B$ has lamination~$L$.

Up to errors uniform in $n$ and tending to zero with $\delta$, 
$$\P^n(E_L~|~\text{no peripheral curves}) = \P_B^n\left(E_L~|~\text{no peripheral curves}\right).$$
(Here the term on the left is equal to $\P^n(E_L)$ because the connection is flat.)
This follows from the sampling algorithm
since removing one or more very small disks does not change the distribution of the LERW away from those disks.

We need to show that $\lim_{n\to\infty}\P^n_B(E_L)$ exists and depends only on the conformal
type of the domain $\Sigma\setminus\{B_1\cup\dots\cup B_k\}$.

By Lemma~\ref{cylinders} the probability $\P^n_B(E_L)$ is given by a sum over $L'\ge L$
of integrals over $\M$ of $Z(\Phi)/Z_0$ times a function $P_{L'}(\Phi)$ independent of $n$.
By Corollary~\ref{Z/Z} the integrand $Z(\Phi)/Z_0$ converges and is bounded independently of $n$. Thus, by bounded convergence, for each $L'$ the integral $\langle \frac{Z(\Phi)}{Z_0},P_{L'}\rangle=\int_{\M} \frac{Z(\Phi)}{Z_0}P_{L'}d\nu$ converges.

We need to show that the sum (weighted by the coefficient $A_{L,L'}$) over $L'$ converges. 
We have
\be\label{PLsum}
\langle Z(\Phi),P_{L'}\rangle=\langle\sum_K X_KT_K,P_{L'}\rangle=\langle\sum_{K\ge L'} X_KT_K,P_{L'}\rangle\ee
since $\langle T_K,P_{L'}\rangle=0$ unless $K\ge L'$. We also have $|\langle T_K,P_{L'}\rangle|\le 4^{|K|}=e^{O(n(K))}$ by the Cauchy--Schwarz inequality and because $|K|=\Theta(n(K))$. 
 
Furthermore, by the Gram-Schmidt process, we see that $A_{L,L'}$ is growing at most exponentially in $n(L')$.
That is, there exists $\xi>0$ such that for any $L'$, we have $\vert A_{L,L'}\vert \le \xi^{n(L)}$.

Using the above bounds the sum~\eqref{PLsum} is bounded by $e^{-c'n(L)}Z_0$ for some arbitrarily 
small $c'>0$ and summing over $L'$ and weighting by $A_{L,L'}$ gives a convergent sum.

We now use a classical convergence argument~\cite{Bi}. We showed that the probabilities of any cylindrical event converge. The cylindrical events form a family of sets which is stable under finite intersection. Since it generates the $\sigma$-field
$\B$ it is a determining class, that is, if two probability measures on $\left(\Omega,\B\right)$ coincide on all cylindrical events, then they are equal. 

Since we furthermore have tightness by Theorem~\ref{tightness}, the sequence of probability measures $\P^n$ admits subsequential limits by Prokhorov's theorem. More precisely, for any subsequence, there exists a subsubsequence which converges. Since its value on the cylindrical events is known, there is only one possible limit. Let us call it $\P_{nonc}$. Now, since $\Omega$ is a metric space, this implies that the sequence $\P^n_{nonc}$ converges weakly to $\P_{nonc}$. 
\end{proof}

As a corollary, we obtain the convergence of the measures $\mu^n_{\Phi}$ for any flat connection $\Phi$.

\begin{corollary}\label{flat}
For any flat unitary connection $\Phi$, there exists a probability measure $\P_{\Phi}$ on $(\Omega,\B)$ such that 
$$\P^n_{\Phi}\to\P_{\Phi}$$
in the sense of weak convergence.
\end{corollary}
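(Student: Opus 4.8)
The plan is to derive Corollary~\ref{flat} as a direct consequence of Theorem~\ref{incompressible} together with the density relationship established earlier in the excerpt. Recall from the subsection on flat connections that the measure $\mu_\Phi$ has density $\prod_{\gamma\subset\Gamma}(2-2\cos\theta_\gamma)$ with respect to $\mu_{inc}$, and crucially this density is a function of the cycle set alone (via the monodromies $\theta_\gamma$), independent of the branches of the CRSF. Since our target measures $\P^n_\Phi$ and $\P^n_{inc}$ are obtained by forgetting all edges except the cycles, the Radon--Nikodym derivative descends to the loop space: we have $d\P^n_\Phi/d\P^n_{inc} = h_\Phi / \int h_\Phi \, d\P^n_{inc}$, where $h_\Phi(\gamma_1,\dots,\gamma_m) = \prod_{j}(2-\Tr(\omega_{\gamma_j}))$ depends only on the multiloop and its homotopy data through the flat connection $\Phi$.

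First I would make precise the claim that $h_\Phi$ is a bounded, continuous functional on the loop space $(\Omega,\B)$. Boundedness is immediate since each factor $2-\Tr(\omega_{\gamma})\in[0,4]$ is uniformly bounded and, by tightness (Theorem~\ref{tightness}), the number of loops does not blow up; more carefully, one invokes the decay estimate (Lemma~\ref{decay}) to control the tail contribution of configurations with many loops, so that $h_\Phi$ is $\P^n_{inc}$-integrable uniformly in $n$. For continuity with respect to the metric topology on $\Omega$, observe that the monodromy $\omega_\gamma$ of a flat connection depends only on the homotopy class of $\gamma$ in $\Sigma\setminus\{x_1,\dots,x_k\}$, and small perturbations of a simple loop in the $d$-metric preserve its homotopy class; thus $h_\Phi$ is in fact \emph{locally constant} on each cylindrical event $E_L$, hence continuous on the support.

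Next I would assemble the convergence. By Theorem~\ref{incompressible} we have weak convergence $\P^n_{inc}\to\P_{inc}$ on $\Omega$. Since $h_\Phi$ is bounded and continuous (on the relevant support), the mapping theorem for weak convergence gives $\int h_\Phi\,d\P^n_{inc}\to\int h_\Phi\,d\P_{inc}=:Z_\infty$, and similarly for $h_\Phi$ multiplied by any bounded continuous test function. The normalizing constant $Z_\infty$ is strictly positive because with positive probability under $\P_{inc}$ a loop configuration carries nontrivial monodromy (for a flat connection that is not identically trivial on the generators, some lamination class $L$ with $E_L$ of positive limiting probability has $h_\Phi>0$). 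Defining $\P_\Phi$ by $d\P_\Phi = (h_\Phi/Z_\infty)\,d\P_{inc}$ gives a probability measure on $(\Omega,\B)$, and for any bounded continuous $f$,
\begin{equation*}
\int f\,d\P^n_\Phi = \frac{\int f\, h_\Phi\,d\P^n_{inc}}{\int h_\Phi\,d\P^n_{inc}} \longrightarrow \frac{\int f\, h_\Phi\,d\P_{inc}}{\int h_\Phi\,d\P_{inc}} = \int f\,d\P_\Phi,
\end{equation*}
which is precisely $\P^n_\Phi\to\P_\Phi$ weakly.

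The main obstacle I anticipate is the uniform integrability needed to push the density through the weak limit: $h_\Phi$ is bounded on each fixed cylindrical event, but to apply the mapping theorem globally I must rule out mass escaping into configurations with arbitrarily many loops, where the product defining $h_\Phi$ could a priori accumulate. This is exactly where the exponential decay of Lemma~\ref{decay} (the number of loops has exponentially small tails uniformly in $n$) is essential: it guarantees that $h_\Phi$ is uniformly integrable against the family $\{\P^n_{inc}\}$, so that truncating to at most $k$ loops introduces an error that is uniform in $n$ and vanishes as $k\to\infty$. Once that uniform integrability is in hand, the remainder is the routine passage to the limit sketched above.
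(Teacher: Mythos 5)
Your proposal is correct and follows essentially the same route as the paper: both rest on the observation that the density of $\mu_\Phi$ with respect to $\mu_{inc}$ depends only on the multiloop and is locally constant (hence continuous) on $\Omega$, since nearby simple loops are isotopic and monodromy of a flat connection is an isotopy invariant, so that convergence is inherited from Theorem~\ref{incompressible}. The only difference is bookkeeping: the paper verifies convergence directly on the determining class of finite intersections of small balls in $\Omega$, on each of which the density is constant, whereas you phrase the transfer via the Radon--Nikodym derivative and invoke Lemma~\ref{decay} to control the unbounded number-of-loops factor and the convergence of the normalizing constant --- a point the paper's own proof leaves implicit.
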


\begin{proof}
It suffices to show the convergence of this sequence of measures on finite intersections of balls in $\Omega$ 
of small radius because this is a determining class for $\B$. For any curve $\gamma$, there is a small radius $r>0$ such that its tubular $r$-neighborhood retracts onto $\gamma$. Any curve in this $r$-neighborhood and winding once around is isotopic to $\gamma$. On any such neighborhood the density $\prod_{\gamma\subset L}\left(2-\omega_\gamma-\omega_\gamma^{-1}\right)$ is constant, hence the convergence follows by Theorem~\ref{noncontractible}.
\end{proof}

\subsection{Convergence in the curved case (proof of main statement)}\label{curvedconvergence}
The measures $\mu_{LC},\mu_{LC^0}$ converge in the following sense.

\begin{theorem}\label{main}
There exist probability measures $\P_{LC},\P_{LC^0}$ on $(\Omega,\B)$ and $(\Omega^*_1,\B)$ respectively 
such that for any sequence $(\G_n)_{n\ge 1}$ of graphs, geodesically embedded on $\Sigma$ 
and conformally approximating $\Sigma$, the sequences of probability measures $\P_{LC}^{n}$ and $\P_{LC^0}^n$ converge weakly towards 
respectively $\P_{LC}$ and $\P_{LC^0}$. 
\end{theorem}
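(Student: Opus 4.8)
The plan is to mirror the structure of the flat case (Theorem~\ref{incompressible}), using tightness plus convergence on a determining class to upgrade subsequential limits into genuine weak convergence. For both $\P_{LC}$ and $\P_{LC^0}$ the sequences are already known to be tight: tightness of $\P^n_{LC}$ on $\Omega$ is Theorem~\ref{tightnessLC} (using Theorem~\ref{nomicroloops} to rule out microscopic loops), and tightness of $\P^n_{LC^0}$ on the augmented space $\Omega_1^*$ is Theorem~\ref{tightness0}. By Prokhorov's theorem each sequence therefore has subsequential weak limits, so it remains only to show that all subsequential limits agree; by the argument of~\cite{Bi} it suffices to show convergence of the probabilities of a determining class of events, namely the cylindrical events $E_L$, together with the resampling/absolute-continuity fact (established in Section~\ref{tightnesspart}) that the macroscopic loops are locally $\SLE_2$ and hence simple.

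First I would treat $\P_{LC}$. The key point is that $\mu_{LC}$ is not a flat-connection measure, but it is an explicit reweighting: a cycle $\gamma$ carries weight $2-2\cos\theta_\gamma$, where $\theta_\gamma$ is the \emph{enclosed curvature}. The strategy is to reduce to the flat case by a polygonal-approximation argument. Fix a cylindrical event $E_L$ relative to removed balls $B_1,\dots,B_k$. On $\Sigma\setminus\{B_1\cup\dots\cup B_k\}$ one approximates the curved metric by a flat metric with conical singularities concentrated at the punctures $z_i$ (so that the enclosed curvature of a loop is determined, up to controlled error, by which punctures it encloses and is therefore a function only of the isotopy class $[\gamma]$). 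For such a piecewise-flat structure the weight $2-2\cos\theta_\gamma$ depends only on the lamination data, so $\P_{LC}(E_L)$ becomes a finite linear combination $\sum_{L}\big(\prod_{\gamma\in L}(2-2\cos\theta_\gamma)\big)\,\P_{inc}(E_L)$ of flat-case cylinder probabilities, each of which converges by Theorem~\ref{incompressible} (equivalently Corollary~\ref{flat}). I would then let the polygonal approximation refine and use the boundedness of the cycle weights ($0\le 2-2\cos\theta\le 4$) together with the exponential decay of lamination probabilities from Lemma~\ref{decay} to interchange the two limits: the series over laminations is dominated by a convergent series uniformly in $n$ and in the approximation, so dominated convergence gives a limit $\P_{LC}(E_L)$ independent of the approximating sequence. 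Tightness then forces every subsequential limit to have these cylinder probabilities, so $\P^n_{LC}\to\P_{LC}$ weakly.

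For $\P_{LC^0}$ the argument is analogous but simpler in combinatorial structure, since the measure is supported on single loops (CRSTs) and the relevant weight is $\theta_\gamma^2$. Here I would exploit that $\mu_{LC^0}$ is the $t\to0$ limit of the flat-connection measures $\mu_{\Phi_t}$, where $\Phi_t=\{e^{it\theta_e}\}$; concretely $2-2\cos(t\theta)=t^2\theta^2+O(t^4)$, so after dividing by $t^2$ the cylinder probabilities $\P^n_{\Phi_t}(E_L)$ converge (in $n$, by Corollary~\ref{flat}) to quantities whose $t\to0$ limit reweights $\P_{inc}$ by $\theta_\gamma^2$. Since everything takes place on $\Omega_1^*$, the only subtlety is that a loop may shrink to a point in the limit, which is exactly why the target space is the augmented $\Omega_1^*$; the resampling argument shows the surviving loops are locally $\SLE_2$, and Theorem~\ref{nonzero} guarantees a macroscopic loop occurs with positive probability. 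Convergence on cylinder events plus tightness (Theorem~\ref{tightness0}) again yields a unique limit $\P_{LC^0}$.

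The main obstacle I expect is the interchange of limits in the curved case: one must control the error in replacing the enclosed curvature $\theta_\gamma$ of a discrete loop by a quantity depending only on its isotopy class under the polygonal (conical-flat) approximation, uniformly in the mesh $1/n$. The danger is that loops passing close to a concentration of curvature could see a weight that does not stabilize along the lamination decomposition. The resolution is that by Theorem~\ref{nomicroloops} the measure places asymptotically no mass on small loops, so with high probability every loop either encloses a definite set of punctures or is macroscopic with well-separated geometry; combined with the uniform bound $2-2\cos\theta\in[0,4]$ and the Lemma~\ref{decay} decay bound, this confines the curvature-discretization error to events of vanishing probability and makes the dominated-convergence interchange legitimate.
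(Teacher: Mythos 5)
Your high-level architecture (tightness via Theorems~\ref{tightnessLC} and~\ref{tightness0}, convergence of cylinder probabilities as a determining class, and a polygonal reduction to the flat case) matches the paper's strategy, but your key reduction step fails as you set it up. You concentrate the conical singularities \emph{at the punctures $z_i$ of the cylindrical event}, precisely so that the cycle weight $2-2\cos\theta_\gamma$ becomes a function of the isotopy class. The trouble is that such a cone metric does not approximate $\mu_{LC}$ at all: for a smooth metric the enclosed curvature is \emph{not} an isotopy invariant (on a round sphere punctured at the two poles, the equator and a mid-latitude circle are isotopic, yet they enclose curvature $2\pi$ and $2\pi(1-\cos\phi)$, so their weights differ by order one), and once all curvature sits at the $k$ punctures, any loop enclosing none of them is contractible in the punctured surface and receives monodromy weight $2-2\cos 0=0$, whereas $\mu_{LC}$ gives such loops definite positive weight. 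This is an order-one discrepancy on events of non-vanishing probability, so neither Theorem~\ref{nomicroloops} nor Lemma~\ref{decay} can absorb it, and the claimed identity $\P_{LC}(E_L)=\sum_{L}\bigl(\prod_{\gamma\in L}(2-2\cos\theta_\gamma)\bigr)\P_{inc}(E_L)$ is simply not available. The paper avoids this by triangulating: the cone points of $\Sigma_\eps$ are spread densely over the whole surface, at mutual distance $\delta(\eps)\to 0$ with $\eps=o(\delta)$, so that \emph{every} loop's enclosed curvature, hence its weight, is approximately preserved; for fixed $\eps$ the Levi-Civita connection of $\Sigma_\eps$ is flat on the surface punctured at \emph{all} cone points, and Corollary~\ref{flat} gives a limit $\P_{LC,\eps}$ as $n\to\infty$; one then shows $(\P_{LC,\eps})_\eps$ is Cauchy in $\eps$ (adding a cone point perturbs the Green function, hence the function $F$ of Theorem~\ref{limit} and the cylinder probabilities, negligibly, because $\delta\gg\eps$), and concludes by a diagonal argument in $(n,\eps)$ rather than by your dominated-convergence interchange.

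There is a second gap in your treatment of $\P_{LC^0}$: you invoke Corollary~\ref{flat} for the measures $\mu_{\Phi_t}$ with $\Phi_t=\{e^{it\theta_e}\}$, calling them ``flat-connection measures.'' They are not flat: $\Phi_t$ has curvature $t\theta$ around each face. Worse, $\mu_{LC^0}$ is only defined when $\Sigma$ is contractible, and on a contractible surface every genuinely flat connection is gauge-trivial, so there are no noncontractible loops and the measure $\P_{inc}$ that you propose to reweight by $\theta_\gamma^2$ does not exist. Hence the assertion that $\P^n_{\Phi_t}(E_L)$ converges ``by Corollary~\ref{flat}'' is unjustified; proving convergence of $\P^n_{\Phi_t}$ is exactly as hard as the curved-case convergence you are trying to establish. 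The paper instead handles $\P^n_{LC^0}$ by the same dense conical approximation as $\P^n_{LC}$: work on $\Sigma_\eps$ punctured at all cone points where the connection is flat, obtain $\P_{LC^0,\eps}$ for fixed $\eps$, and then run the Cauchy-in-$\eps$ and diagonal arguments.
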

\begin{proof}
Let us approximate $\Sigma$ by a polygonal surface $\Sigma_{\eps}$, 
that is, with a surface which is flat except for conical singularities.
A standard way to do this is to take a fine triangulation of the surface (with triangles
of diameter at most $\eps$ and whose angles are bounded from below), and replace each triangle
with the Euclidean triangle with the same edge lengths. As $\eps\to0$ the conformal structure
on $\Sigma_\eps$ converges to that of $\Sigma$. (Indeed, there is a homeomorphism $\psi_{\eps}$ from $\Sigma_\eps$
to $\Sigma$ which is $(1+o(1))$-biLipschitz and therefore $(1+o(1))$-quasiconformal.)

Any graph embedded on $\Sigma$ or $\Sigma_\eps$
can be embedded on $\Sigma_{\eps}$ or $\Sigma$ using $\psi_{\eps}^{-1}$ or $\psi_{\eps}$; furthermore a graph conformally close to
$\Sigma_\eps$ will have image on $\Sigma$ conformally close to $\Sigma$, and vice versa.

Let $z_1,\dots,z_k$ be the vertices of $\Sigma_{\eps}$. The Levi-Civita connection on $\Sigma_{\eps}$
is a flat connection on $\Sigma_{\eps}\setminus\{z_1,\dots,z_k\}$ and approximates the 
Levi-Civita connection on $\Sigma$,
in the sense that for small $\eps$ the curvature enclosed by any loop (chosen independently of the triangulation) 
is close for both connections.
Restricting to $\G_n$, this shows that $\P^n_{LC,\eps}$ is close to $\P^n_{LC}$ (since cylinder events have 
measures which are within $o(1)$ of each other). 

By Corollary \ref{flat}, if we fix $\Sigma_{\eps}$ and take $\G_n$ embedded on $\Sigma_{\eps}$ and
conformally approximating it as $n\to\infty$,
the measures $\P^n_{LC,\eps}$ converge as $n\to\infty$ to a limit  $\P_{LC,\eps}$. 
Similarly for $\P^n_{LC^0,\eps}$. 

For any $\varepsilon>0$ the probabilities of the cylindrical events (away from the singularities) are 
determined by the function $F$ of Lemma~\ref{limit}. These are expressed as integrals of Green's function on paths avoiding the singularities. The addition of a singularity modifies the Green's function, and hence the probability, 
by a negligible amount. 

By a diagonal argument, we can take $n\to\infty$ and then $\eps\to 0$ and conclude
that~$\P^{n}_{LC}$ and~$\P^n_{LC^0}$ converge weakly as $n\to\infty$. 
\end{proof}

From experimental simulations, the probability of getting two or more loops in~$\mu_{LC}$ on the round sphere is on the order of one percent. Hence measures~$\mu_{LC}$ and~$\mu_{LC^0}$ are close for the total variation distance.

The measure $\P_{LC^0}$ is a limit of~$\P_{LC}$ when the metric is scaled by a factor $t\to 0$. 
Hence it is not obvious that Theorem~\ref{nomicroloops} implies that~$\P_{LC^0}$ also 
is supported on macroscopic loops. However, we conjecture it to be true.

\begin{conjecture}
$\P_{LC^0}$ is supported on $\Omega_1$ that is 
$$\P_{LC^0}\left(\text{the area of the loop is zero}\right)=0\,.$$
\end{conjecture}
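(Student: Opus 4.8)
The plan is to prove the stronger statement that $\P^n_{LC^0}$ satisfies the same no-microscopic-loop estimate as $\P^n_{LC}$, namely $\limsup_n\P^n_{LC^0}(\text{there is a loop of area}\le\alpha)=o(\alpha)$, and then to pass to the scaling limit. The difficulty anticipated in the text comes from viewing $\P_{LC^0}$ through the degenerate $t\to0$ rescaling of the curvature; I would avoid this entirely by comparing $\mu^n_{LC^0}$ and $\mu^n_{LC}$ directly at the fixed metric ($t=1$) on the same graph $\G_n$. There both measures are built from the same Levi-Civita holonomies $\theta_\gamma$, and since a spanning CRSF with a single cycle is exactly a CRST, the measure $\mu^n_{LC}$ conditioned to have one loop is precisely the CRST measure with weight $(2-2\cos\theta_\gamma)\prod_e c(e)$, differing from $\mu^n_{LC^0}$ only through the per-cycle ratio $\theta_\gamma^2/(2-2\cos\theta_\gamma)$.

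Two elementary facts drive the comparison. First, $2-2\cos\theta=4\sin^2(\theta/2)\le\theta^2$ for all $\theta$, so the partition function $Z_{LC^0}=\sum_{\text{CRST}}\theta_\gamma^2\prod_e c(e)$ dominates termwise the single-loop partition function $\tilde Z=\sum_{\text{CRST}}(2-2\cos\theta_\gamma)\prod_e c(e)$ of $\mu^n_{LC}$. Second, $\theta^2/(2-2\cos\theta)=\bigl((\theta/2)/\sin(\theta/2)\bigr)^2$ is increasing in $|\theta|$ and tends to $1$ as $\theta\to0$; since a loop of enclosed area $\le\alpha$ encloses curvature $|\theta_\gamma|\le\|K\|_\infty\,\alpha$ (the curvature being bounded on the compact surface), on the event that the loop has area $\le\alpha$ this ratio is at most $1+\eta(\alpha)$ with $\eta(\alpha)\to0$. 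Combining the two bounds,
\[
\P^n_{LC^0}(\text{area}\le\alpha)=\frac{\sum_{\text{CRST}:\ \text{area}\le\alpha}\theta_\gamma^2\,\prod_e c(e)}{Z_{LC^0}}\le\bigl(1+\eta(\alpha)\bigr)\,\P^n_{LC}\bigl(\text{area}\le\alpha\mid\text{single loop}\bigr).
\]

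It then remains to control the conditional probability. Its numerator is at most $\P^n_{LC}(\text{there is a loop of area}\le\alpha)=o(\alpha)$ uniformly in large $n$ by Theorem~\ref{nomicroloops}, while its denominator $\P^n_{LC}(\text{single loop})=1-\P^n_{LC}(\ge 2\text{ loops})\ge 1-\xi^2$ is bounded below uniformly in $n$ by Lemma~\ref{decay}. Hence $\limsup_n\P^n_{LC^0}(\text{area}\le\alpha)=o(\alpha)$, the desired analog of Theorem~\ref{nomicroloops}. To pass to the limit cleanly I would switch to diameter: a loop of diameter $\le r$ (together with the disk it bounds) lies in a ball of radius $r$, so $\{\text{diam}\le r\}\subseteq\{\text{area}\le Cr^2\}$, giving $\limsup_n\P^n_{LC^0}(\text{diam}\le r)=o(r^2)$. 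Diameter is continuous for the metric on $\Omega_1^*$, so $\{\text{diam}<r\}$ is open and contains the degenerate part $\{\text{area}=0\}$; the portmanteau inequality for open sets and the weak convergence $\P^n_{LC^0}\to\P_{LC^0}$ of Theorem~\ref{main} yield $\P_{LC^0}(\text{area}=0)\le\liminf_n\P^n_{LC^0}(\text{diam}<r)\le o(r^2)$, and letting $r\to0$ gives the conjecture.

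The algebraic comparison is robust, so I expect the genuine obstacle to be conceptual rather than computational: one must be disciplined about applying Theorem~\ref{nomicroloops} and Lemma~\ref{decay} to the \emph{same} fixed-metric connection (Levi-Civita at $t=1$) that defines $\mu^n_{LC^0}$, since it is precisely the temptation to follow the $t\to0$ limit that makes those results seem unavailable. Concretely, the step most in need of care is the passage from the static estimate to the scaling limit: one must verify that the uniform lower bound $\P^n_{LC}(\text{single loop})\ge 1-\xi^2$ holds across the whole approximating sequence and that the geometric inequality relating diameter and enclosed area is uniform over the graphs $\G_n$. Should continuity of diameter on $\Omega_1^*$ fail to be clean near degeneration, one can instead work with the open sets $\{\text{diam}<r\}$ directly, as diameter is $1$-Lipschitz for the Hausdorff-type metric defining $\Omega_1^*$.
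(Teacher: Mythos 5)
First, note that this statement is an open conjecture in the paper: the authors explicitly write that it is ``not obvious'' that Theorem~\ref{nomicroloops} transfers to $\P_{LC^0}$, and they offer no proof. So your proposal must stand entirely on its own. Its core reduction is genuinely attractive and, as far as it goes, correct: since a CRSF with a single cycle is a CRST, $\mu^n_{LC}$ conditioned on one loop is the CRST measure with cycle weight $2-2\cos\theta_\gamma$, and the two termwise inequalities $2-2\cos\theta\le\theta^2$ (for the partition functions) and $\theta^2/(2-2\cos\theta)\le 1+\eta(\alpha)$ on the small-area event (valid because area $\le\alpha$ forces $|\theta_\gamma|\le\|K\|_\infty\alpha$ on a compact surface) do yield
\[
\P^n_{LC^0}\left(\text{area}\le\alpha\right)\le\bigl(1+\eta(\alpha)\bigr)\,
\frac{\P^n_{LC}\left(\exists\text{ loop of area}\le\alpha\right)}{\P^n_{LC}\left(\text{exactly one loop}\right)}\,,
\]
and the portmanteau step at the end is fine (the quadratic inclusion $\{\mathrm{diam}\le r\}\subseteq\{\text{area}\le Cr^2\}$ can fail on surfaces with thin necks, but a modulus $f(r)\to 0$ obtained by compactness suffices, so that is a minor repair).

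The genuine gap is the denominator. You assert $\P^n_{LC}(\text{exactly one loop})\ge 1-\xi^2$ uniformly in $n$ ``by Lemma~\ref{decay}'', but that lemma says only: for every $\xi$ there exist $N$ \emph{and} $K$ such that $\P^n(\ge k\text{ loops})\le\xi^k$ for $n\ge N$ and $k\ge K$. It gives no bound at $k=2$, hence no lower bound whatsoever on the single-loop probability; a priori $\P^n_{LC}(\Omega_1)$ could tend to $0$ while Lemma~\ref{decay} still holds, and then your inequality degenerates. Nothing else in the paper fills this hole: Theorem~\ref{nonzero} gives positive probability of a \emph{macroscopic} loop, not of a \emph{unique} loop; Corollary~\ref{noloopLC} gives a no-further-loop probability of $1-O(c^2)$, which is informative only when the total curvature $c$ is small (so your argument would be salvageable for surfaces of small total curvature, but not in general); and the paper's only evidence that $\P_{LC}$ typically has one loop is the \emph{experimental} remark about the round sphere. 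So the missing ingredient, a uniform-in-$n$ lower bound $\liminf_n\P^n_{LC}(\Omega_1)>0$, is itself an unproven statement of essentially the same depth as the conjecture you are trying to prove, and your proposal as written does not establish it.
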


\subsection{A comment and two applications}\label{commentandapplications}

The convergence result of probability measures $\mu_{c,\alpha}$ on CRSFs is actually more general and can be adapted for a wide range of functions $\alpha$ on the cycles, not necessarily coming from connections. This is due to the fact that the crucial convergence argument is made for the uniform measure on noncontractible CRSFs (Theorem~\ref{noncontractible}). We now state two applications of the previous theorems. 

As an application of Theorem~\ref{main}, we obtain a result on higher moments of the area of a uniform CRST, also mentioned in the paper~\cite{KKW}. 
Let $A$ denote the combinatorial area (number of faces) of the cycle of a cycle-rooted spanning tree of $\G_n$. Let $\theta=A/n^2$ be the Euclidean area of the cycle. We denote by $\E_{\text{unif}}^n$ the expectation with respect to the uniform measure on CRSTs on $\G_n$.

\begin{corollary}\label{ak}
For $k\ge 2$ there exists $a_k(D)>0$ such that 
$$\E_{\text{unif}}^n\left(A^k\right)=a_k(D)n^{2k-2}(1+o(1))\,.$$
\end{corollary}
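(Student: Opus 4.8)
The plan is to write the $k$-th moment of the area under the uniform measure as the product of the second moment (known from \cite{KKW}) and a moment of the enclosed area under the Levi-Civita CRST measure, and then to take $n\to\infty$ in the latter using the weak convergence of Theorem~\ref{main}. On the flat domain $D$ we use the connection whose curvature two-form is the area form, so that the monodromy around a cycle $\gamma$ is $e^{i\theta_\gamma}$ with $\theta_\gamma$ its enclosed Euclidean area; then $\mu_{LC^0}$ weights a CRST by $\theta_\gamma^2$ and its loop marginal is $\P_{LC^0}^n$. This weight is a function of the cycle of the type covered by the generalization of Theorem~\ref{main} noted above, so $\P_{LC^0}^n$ converges weakly on $\Omega_1^*$.

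First I would record the reweighting identity. Since a CRST with cycle $\gamma$ is weighted by $\theta_\gamma^2$ under $\mu_{LC^0}$ and equally under the uniform measure, the loop marginals satisfy
\[
\frac{d\P_{LC^0}^n}{d\P_{\text{unif}}^n}(\gamma)=\frac{\theta_\gamma^{2}}{\E_{\text{unif}}^n(\theta^{2})}.
\]
Integrating $\theta^{k-2}$ against both sides gives, for every real $k\ge 2$,
\[
\E_{\text{unif}}^n(\theta^{k})=\E_{\text{unif}}^n(\theta^{2})\,\E_{LC^0}^n(\theta^{\,k-2}).
\]
I stress that this holds for all $k\ge 2$, odd or even, because the exponent $k-2\ge 0$ and $\theta_\gamma\ge 0$, so $\theta^{\,k-2}$ is a genuine nonnegative observable on loops.

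Next I would pass to the limit in each factor. For the first factor I invoke the second-moment estimate of \cite{KKW}, namely $\E_{\text{unif}}^n(A^{2})=a_2(D)n^{2}(1+o(1))$ with $a_2(D)>0$, equivalently $\E_{\text{unif}}^n(\theta^{2})=a_2(D)n^{-2}(1+o(1))$; this is also the base case $k=2$. For the second factor I use Theorem~\ref{main}: the functional $\gamma\mapsto\theta_\gamma^{\,k-2}$ is bounded by $|D|^{\,k-2}$ and is continuous at simple curves, where the enclosed area varies continuously with the curve in the metric of $\Omega$; since $\P_{LC^0}$ is supported on simple curves this functional is almost surely continuous, and its boundedness yields uniform integrability, so weak convergence gives $\E_{LC^0}^n(\theta^{\,k-2})\to\E_{LC^0}(\theta^{\,k-2})$. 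Combining the two factors and multiplying by $n^{2k}$ yields $\E_{\text{unif}}^n(A^{k})=a_k(D)n^{2k-2}(1+o(1))$ with $a_k(D):=a_2(D)\,\E_{LC^0}(\theta^{\,k-2})$.

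Finally, positivity of $a_k(D)$ reduces to $\E_{LC^0}(\theta^{\,k-2})>0$ for $k>2$ (the case $k=2$ being \cite{KKW}), which follows from Theorem~\ref{nonzero}: under $\P_{LC^0}$ a macroscopic, hence positive-area, loop occurs with positive probability, so $\theta^{\,k-2}>0$ on a set of positive measure. The step I expect to demand the most care is the last convergence: one must verify that the enclosed-area functional is genuinely continuous at $\P_{LC^0}$-typical curves and control the contribution of curves that are near-degenerate or approach $\partial D$, using the bound $\theta\le|D|$ to rule out any escape of mass in the area moment as $n\to\infty$.
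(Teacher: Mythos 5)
Your proof follows essentially the same route as the paper's: the same factorization $\E_{\text{unif}}^n(A^k)=\E_{\text{unif}}^n(A^2)\,\E^n_{LC^0}(\theta^{k-2})\,n^{2(k-2)}$ (which you derive via the Radon--Nikodym density of $\P^n_{LC^0}$ with respect to the uniform loop marginal), the second-moment asymptotics of \cite{KKW} for the first factor, weak convergence from Theorem~\ref{main} for the second, and Theorem~\ref{nonzero} for positivity of the limiting constant. If anything, you are more careful than the paper on the one delicate point, namely justifying that weak convergence yields convergence of $\E^n_{LC^0}(\theta^{k-2})$ via boundedness and almost-sure continuity of the area functional, which the paper asserts without comment.
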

\begin{proof}Let $k\geq 2$. We have
\begin{eqnarray*}
\E_{\text{unif}}^n\left(A^k\right)&=&\E^n_{LC^0}\left(A^{k-2}\right)\E_{\text{unif}}^n\left(A^{2}\right)\\
&=& n^{2k-4}\E_{LC^0}^n\left(\theta^{k-2}\right)\E_{\text{unif}}^n\left(A^{2}\right)\\
&=& n^{2k-2}C(D)|D|\E_{LC^0}\left(\theta^{k-2}\right)(1+o(1))\,,
\end{eqnarray*}
where the last equality follows from Theorem~$6$ of~\cite{KKW} (here $C(D)$ is a constant proportional to the torsional rigidity of the domain) and the weak convergence of $\P_{LC^0}^n$ to $\P_{LC^0}$. Since $\theta$ is bounded and for $\P_{LC^0}$ is with positive probability nonzero by Theorem~\ref{nonzero}, 
the limit $\E_{LC^0}\left(\theta^{k-2}\right)$ is a positive real. The corollary is proved by taking $a_k(D)=C(D)|D|\E_{LC^0}\left(\theta^{k-2}\right)$.
\end{proof}

As an application of Theorem~\ref{noncontractible}, let us state the following corollary.

\begin{corollary}
Consider a uniform spanning forest on an annulus-graph wired on its boundary. Then the simple closed
curve separating the two connected components has a conformally invariant limit which is given by the measure $\P_{nonc}$ conditional on having only one component. 
\end{corollary}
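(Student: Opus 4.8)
The plan is to reduce the statement to Theorem~\ref{incompressible} by a finite-$n$ cycle--cocycle duality, and then to pass the conditioning to the limit. First I would fix the setup: let $\G_n$ conformally approximate the annulus $\Sigma$, with its two boundary circles each contracted to a wired node $r_{in},r_{out}$. A uniform spanning forest wired on the boundary is then the uniform measure on spanning forests with exactly two components, one containing $r_{in}$ and the other $r_{out}$, and the curve in the statement is the interface $\sigma$ separating the two trees, a noncontractible simple closed curve winding once around $\Sigma$.

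The heart of the argument is a bijection at finite $n$. Passing to the planar dual $\G_n^{*}$ (with the inner disk and the outer region as the two boundary dual faces), edge-complementation $F\mapsto E(\G_n)\setminus F$ realizes cycle--cocycle duality: a two-component wired spanning forest $F$ of $\G_n$ is sent to a subgraph of $\G_n^{*}$ that spans all dual vertices and has exactly one independent cycle per component, all noncontractible, i.e.\ to a single-cycle incompressible CRSF of $\G_n^{*}$, and under this map the interface $\sigma$ is carried onto the unique cycle. This map is a bijection between the two finite sets, hence sends uniform to uniform. Since $\mu_{inc}$ is by definition the uniform measure on incompressible CRSFs, the law of $\sigma$ coincides exactly with the law of the cycle under $\P_{inc}$ on $\G_n^{*}$ conditioned on having a single loop. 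Equivalently, one sees this through the sampling algorithm of Theorem~\ref{algo}: Wilson's algorithm run to the wired boundaries produces $\sigma$ as a loop-erased path, matching the single-cycle production in the cycle-popping description of $\mu_{inc}$.

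Next I would invoke Theorem~\ref{incompressible} on the dual and transfer the conditioning. The dual of a conformally approximating sequence again conformally approximates the same annulus, since edge-complementation is the discrete Hodge star and preserves convergence of the transfer impedance, hence the conformal type (the modulus of $\Sigma$); as the annulus is non-simply connected, the theorem applies and $\P_{inc}^{n}$ on $\G_n^{*}$ converges weakly to the conformally invariant limit $\P_{inc}$. On the annulus every noncontractible simple loop is isotopic to the core, so \emph{exactly one component} is a countable union of cylindrical events of positive total mass in the limit (the loops are macroscopic and noncontractible), while the transitions between loop numbers are isotopy-class boundaries carrying no mass; thus the conditioning event is a continuity set and $\P_{inc}^{n}(\,\cdot\mid\text{one loop})\to\P_{inc}(\,\cdot\mid\text{one loop})$. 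Combined with the finite-$n$ identification of the interface law, this shows $\sigma$ converges in law to $\P_{inc}$ conditioned on a single component, which is conformally invariant, as claimed.

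The main obstacle is the finite-$n$ duality step: one must pin down precisely how the wiring contracts the boundary vertices and the two boundary faces of $\G_n^{*}$ so that edge-complementation is an \emph{exact} bijection between two-component wired forests and single-cycle incompressible CRSFs of $\G_n^{*}$, with the interface landing on the cycle up to the $o(1)$ offset between a primal edge and its dual. Checking that this bijection is uniform-to-uniform, and that \emph{one loop} is genuinely a continuity set for $\P_{inc}$ on the annulus, are the points requiring care.
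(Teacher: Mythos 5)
Your proposal is correct and takes essentially the same route as the paper: the paper's proof is exactly the cycle--cocycle duality identifying the wired spanning forest's interface with the unique cycle of a uniform incompressible CRST on the dual annulus graph (with free boundary conditions), followed by an appeal to Theorem~\ref{incompressible}. The extra care you take in passing the conditioning to the limit fills in a step the paper leaves implicit, and it is even simpler than you suggest: since multiloops with different numbers of components are at infinite distance in $\Omega$, the event $\Omega_1$ (one loop) is clopen, hence automatically a continuity set once its limiting probability is positive.
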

\begin{proof}
This follows from the fact that the dual of a wired essential forest on the annulus is a uniform noncontractible CRST on the annulus with free boundary conditions. The measure is thus given by $\mu^n_{nonc}$ conditional on having one loop. The convergence follows from Theorem~\ref{noncontractible}.
\end{proof} 

Figure~\ref{wiredtree} shows a sample of this interface between the two tree components of a 
wired uniform spanning forest on the annulus.

\begin{figure}[ht]
\centering
\includegraphics[width=10cm]{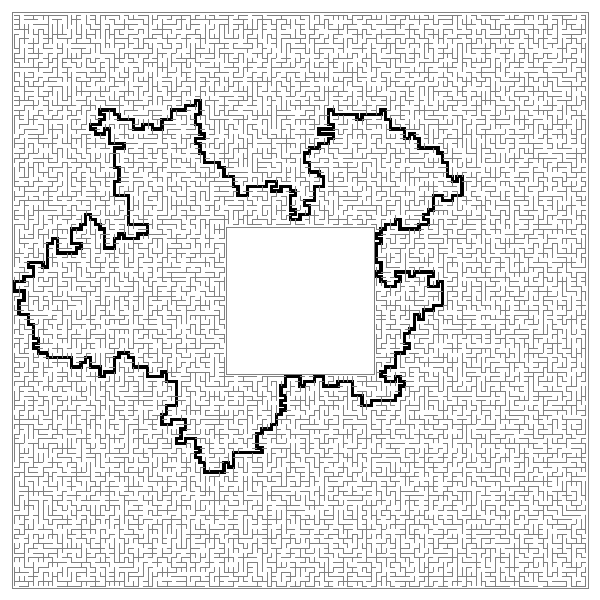}
\caption{A uniform noncontractible CRST on an annulus in the square grid}\label{wiredtree}
\end{figure}

\section{Properties of the measures}\label{properties}

In this section, we mention a few properties of the measures on CRSFs on surfaces we have been considering. 

\subsection{Markov property}\label{Markov}

CRSFs on surfaces with general cycle weights satisfy the following \emph{spatial Markov property}.
Consider a graph $\G$ embedded in a compact oriented surface $\Sigma$, possibly with boundary.  
Let $\alpha:\Omega_1\to\R_{>0}$ be any positive weight function on the cycles of~$\G$.

Let $\ga$ be the random CRSF on $\G$. Let $\{\gamma\}=\{\gamma_1,\ldots,\gamma_k\}$ be a family of its cycles. These cycles 
separate $\Sigma$ in a number of connected components $\Sigma_1,\ldots,\Sigma_r$. 
For $i=1,\ldots,r$, denote by $\G_i$ the intersection of $\G$ and the closure of $\Sigma_i$ (\emph{i.e.} $\Sigma_i$ along with its boundary) and by $\partial \G_i$ the boundary cycles. 

\begin{prop}
Conditional on $\{\gamma\}\subset\ga$, the random CRSF $\ga$ is equal in distribution to $$\{\gamma\}\sqcup_{i=1}^{r}\ga_i\,.$$
\end{prop}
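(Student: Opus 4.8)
The plan is to exploit the multiplicative (Gibbs) structure of the measure $\mu_\Phi$ directly, reducing the statement to a bijective decomposition of CRSFs together with a factorization of weights, so that the asserted conditional independence falls out of a single normalizing-constant computation.

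First I would establish a bijection between the set of CRSFs $\Gamma$ on $\G$ that contain $\{\gamma\}$ and the set of tuples $(\Gamma_1,\dots,\Gamma_r)$, where each $\Gamma_i$ is an essential CRSF with Dirichlet boundary conditions on $\G_i$. Given such a $\Gamma$, define $\Gamma_i$ to be the set of edges of $\Gamma$ lying strictly in the interior of $\Sigma_i$ (the edges of the $\gamma_j$ being assigned to $\{\gamma\}$, counted once). The crucial topological observation is that, since the $\gamma_j$ separate $\Sigma$ and each component of a CRSF carries a unique cycle, every boundary vertex lies on some $\gamma_j$ and hence belongs to the component whose cycle is that $\gamma_j$. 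Consequently a component of $\Gamma$ whose cycle lies in the interior of some $\Sigma_i$ cannot reach the boundary, while a component whose cycle is a separating curve $\gamma_j$ restricts, on each adjacent region, to a forest of trees hanging off $\gamma_j$. Therefore $\Gamma_i$ is exactly a disjoint union of interior unicycles and of trees rooted on $\partial\G_i$, i.e. an essential CRSF with Dirichlet boundary conditions. The inverse map glues the $\Gamma_i$ to $\{\gamma\}$; one checks that attaching the Dirichlet trees to the boundary cycles creates no new cycles, so the result is again a CRSF containing $\{\gamma\}$.

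Second I would verify that the weight factorizes along this bijection. The edge set of $\Gamma$ partitions as the edges of $\{\gamma\}$ together with the interior edge sets of the $\Gamma_i$, and the cycle set of $\Gamma$ partitions as $\{\gamma_1,\dots,\gamma_k\}$ together with the interior cycles of the $\Gamma_i$. Since the cycle weight $\alpha(\delta)$ (equivalently $2-\Tr(\omega_\delta)$ for the connection $\Phi$) depends only on the cycle $\delta$ through its monodromy, and since the restricted connection $\Phi_i$ assigns to each interior cycle the same monodromy as $\Phi$, the total weight factors as
$$w(\Gamma) = w(\{\gamma\})\prod_{i=1}^r w_i(\Gamma_i),$$
where $w(\{\gamma\})=\prod_{e\in\{\gamma\}}c(e)\prod_{j}\alpha(\gamma_j)$ is constant on the conditioning event and $w_i(\Gamma_i)$ is precisely the weight defining the law of $\ga_i$. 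Summing over the conditioning event and using the bijection, the normalizing constant factors as $w(\{\gamma\})\prod_i Z_i$ with $Z_i=\sum_{\Gamma_i} w_i(\Gamma_i)$ the partition function of $\ga_i$; dividing, the factor $w(\{\gamma\})$ cancels and
$$\P\!\left(\ga=\Gamma \mid \{\gamma\}\subset\ga\right) = \prod_{i=1}^r \frac{w_i(\Gamma_i)}{Z_i} = \prod_{i=1}^r \P(\ga_i=\Gamma_i),$$
which is exactly the assertion that conditionally the $\ga_i$ are independent with the stated essential-CRSF laws.

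The main obstacle I expect is the topological bookkeeping in the bijection: the careful argument that conditioning on $\{\gamma\}\subset\ga$ forces the restriction to each region to exhibit Dirichlet (rather than free) boundary behavior, namely that no component cycle can straddle a separating curve and that interior components remain interior, so that each $\Gamma_i$ is genuinely an essential CRSF with boundary $\partial\G_i$. Once this decomposition and the resulting factorization of both the weights and the partition function are secured, the conditional independence is immediate.
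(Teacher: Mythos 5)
Your proof is correct, and it rests on the same key step as the paper's proof: the decomposition of any CRSF $\Gamma\supset\{\gamma\}$ as $\{\gamma\}\sqcup_{i=1}^{r}\Gamma_i$ with each $\Gamma_i$ an essential CRSF with Dirichlet boundary conditions on $\G_i$. The paper asserts this decomposition in a single sentence, and your topological bookkeeping (embeddedness forces any path of $\G$ between two regions to pass through a vertex of some $\gamma_j$; those vertices all lie in the components whose cycles are the $\gamma_j$, so interior-cycle components stay strictly interior and each hanging tree meets the boundary in exactly one vertex, whence gluing creates no new cycles) is precisely what the paper elides. Where you genuinely diverge is in how the proposition is then deduced: the paper concludes by invoking the cycle-popping algorithm (run with initial set $\{\gamma\}$, cf.\ the remark after Theorem~\ref{algo}), whereas you perform the direct Gibbs computation, factorizing the weight as $w(\Gamma)=w(\{\gamma\})\prod_i w_i(\Gamma_i)$ and the conditional partition function as $w(\{\gamma\})\prod_i Z_i$, so that the common factor cancels. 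Your route is more self-contained and in fact more general: it uses only the definition of the measure, and it applies verbatim to any positive weight function $\alpha:\Omega_1\to\R_{>0}$, including connection weights $2-2\cos\theta\in[0,4]$, whereas the sampling algorithm requires $\alpha\in[0,1]$ (or $[0,2]$ after the orientation-halving trick), so the paper's appeal to it implicitly needs an extra reduction for large cycle weights. What the algorithmic argument buys in exchange is brevity and a dynamical interpretation of the Markov property: conditionally on $\{\gamma\}$, the CRSF can be generated by running the walks rooted at $\{\gamma\}$ independently in each $\G_i$.
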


\begin{proof} 
A CRSF $\Gamma$ on $\G$ which contains $\{\gamma\}$ is the union $\{\gamma\}\sqcup_{i=1}^{r}\Gamma_i$ of $\{\gamma\}$ with essential CRSFs $\Gamma_i$ on each one of the connected component $\Sigma_i$ of $\Sigma\setminus\{\gamma\}$. The proposition then follows directly from the cycle popping algorithm. 
\end{proof}

\subsection{Restriction property}

Let $D_1\subset D$ be two planar Jordan domains. Let $\G$ be a finite graph approximation of $D$. Let $\Phi$ be a connection on a line bundle over $\G$. We denote $\det \Delta^D(\Phi)$ the line bundle Laplacian on $\G$ with connection $\Phi$. For any subset $S$ of the set of vertices of $\G$, we denote by $\det \Delta^{D}_{S}(\Phi)$ the line bundle Laplacian with Dirichlet boundary conditions on $S$, see~\cite{Ke1}. 

The measure on multicurves that stay in $D_1$ is absolutely continuous with respect to the measure on $D$. The Radon-Nikodym derivative is given by a cross-ratio of determinants of the Laplacian with different boundary conditions:

\begin{lemma}For any finite set of simple non-intersecting curves $\{\gamma\}\subset D_1$, we have
$$\frac{\mu_{\Phi}^{D_1}\left(\{\gamma\}\right)}{\mu_{\Phi}^D\left(\{\gamma\}\right)}=\left(\frac{\det \Delta^{D_1}(\Phi)}{\det \Delta^{D_1}_{\{\gamma\}}(\Phi)}\right)/\left(\frac{\det \Delta^D(\Phi)}{\det \Delta^D_{\{\gamma\}}(\Phi)}\right)\,.$$
\end{lemma}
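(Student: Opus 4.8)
The plan is to express each of the two probabilities $\mu_\Phi^{D_1}(\{\gamma\})$ and $\mu_\Phi^{D}(\{\gamma\})$ as a ratio of Laplacian determinants using the expansion of Theorem~\ref{detDelta}, and then to observe that the only domain-independent factor---the product of the cycle weights of $\{\gamma\}$---cancels in the quotient. Throughout, for $\Omega\in\{D_1,D\}$, I use that $\det\Delta^\Omega(\Phi)$ is by Theorem~\ref{detDelta} the full CRSF partition function $\sum_{\Gamma}\prod_{e}c(e)\prod_{\text{cycles}}(2-2\cos\theta)$ on the graph approximating $\Omega$.

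First I would decompose any CRSF on $\G^\Omega$ that carries the cycles $\{\gamma\}=\{\gamma_1,\dots,\gamma_m\}$. Since the $\gamma_i$ are simple, pairwise disjoint, and contained in $D_1\subset D$, conditioning on their presence forces the remainder on the complementary vertex set $V\setminus V_{\{\gamma\}}$ to be an essential CRSF with Dirichlet boundary conditions on the vertices $V_{\{\gamma\}}$ of $\{\gamma\}$: trees rooted on $V_{\{\gamma\}}$ together with bulk unicyclic components not meeting $V_{\{\gamma\}}$. This is the content of the spatial Markov property of Section~\ref{Markov} applied to the separating family $\{\gamma\}$. Hence the total weight of CRSFs containing $\{\gamma\}$ factors as
\[
W^\Omega(\{\gamma\}) = \Bigl(\prod_{i=1}^m (2-2\cos\theta_{\gamma_i})\prod_{e\in\gamma_i}c(e)\Bigr)\,\det\Delta^{\Omega}_{\{\gamma\}}(\Phi),
\]
where the second factor is the essential-CRSF partition function, identified with the Dirichlet determinant by the matrix-tree theorem for line-bundle Laplacians.

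Dividing by the full partition function gives, for each $\Omega$,
\[
\mu_\Phi^\Omega(\{\gamma\}) = \Bigl(\prod_{i=1}^m (2-2\cos\theta_{\gamma_i})\prod_{e\in\gamma_i}c(e)\Bigr)\,\frac{\det\Delta^{\Omega}_{\{\gamma\}}(\Phi)}{\det\Delta^{\Omega}(\Phi)}.
\]
Because $\{\gamma\}\subset D_1\subset D$ and both the conductances $c$ and the connection $\Phi$ agree on the edges of the $\gamma_i$, the bracketed prefactor is literally the same number in the two domains. Taking the quotient $\mu_\Phi^{D_1}(\{\gamma\})/\mu_\Phi^{D}(\{\gamma\})$ therefore cancels this prefactor and leaves exactly the ratio of the domain-dependent quantities $\det\Delta^{\Omega}(\Phi)/\det\Delta^{\Omega}_{\{\gamma\}}(\Phi)$ for $\Omega=D_1$ and $\Omega=D$, which, after rearranging the nested fractions, is the asserted right-hand side.

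The main obstacle is the second step: justifying rigorously the weight-preserving bijection between CRSFs of $\G^\Omega$ carrying the cycles $\{\gamma\}$ and pairs consisting of $\{\gamma\}$ together with an essential CRSF on $\G^\Omega\setminus V_{\{\gamma\}}$, and the accompanying identification of that essential-CRSF partition function with $\det\Delta^\Omega_{\{\gamma\}}(\Phi)$. This rests on the matrix-tree theorem for line-bundle Laplacians with Dirichlet conditions---that the principal minor obtained by grounding $V_{\{\gamma\}}$ enumerates precisely rooted forests together with weighted bulk cycles---and on the fact that the $\gamma_i$ separate the graph, so no surviving component can bridge two pieces in a way that would spoil the factorization. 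Once this combinatorial identity is in place (it is the same mechanism already underlying the Markov property), the cancellation in the final step is immediate.
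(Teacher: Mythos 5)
The paper states this lemma \emph{without proof}, so there is no official argument to compare yours against; the route you take---reading $\mu_\Phi^{\Omega}(\{\gamma\})$ as the probability that the cycle set of the CRSF contains $\{\gamma\}$, peeling off $\{\gamma\}$ to leave an essential CRSF with Dirichlet conditions on the vertices $V_{\{\gamma\}}$, and identifying that partition function with $\det\Delta^{\Omega}_{\{\gamma\}}(\Phi)$ via the Forman--Kenyon determinant theorem---is surely the intended one, and your intermediate formula
\[
\mu_\Phi^{\Omega}(\{\gamma\}) = \Bigl(\prod_{i=1}^m (2-2\cos\theta_{\gamma_i})\prod_{e\in\gamma_i}c(e)\Bigr)\,\frac{\det\Delta^{\Omega}_{\{\gamma\}}(\Phi)}{\det\Delta^{\Omega}(\Phi)}
\]
is correct. (One small correction of emphasis: what prevents new cycles from appearing when an essential CRSF is glued back onto $\{\gamma\}$ is not that the $\gamma_i$ separate the domain, but that each tree component of an essential CRSF contains exactly one vertex of $V_{\{\gamma\}}$.)

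However, your final sentence is not valid algebra, and it conceals a genuine mismatch. Dividing your two formulas gives
\[
\frac{\mu_\Phi^{D_1}(\{\gamma\})}{\mu_\Phi^{D}(\{\gamma\})}
=\left(\frac{\det\Delta^{D_1}_{\{\gamma\}}(\Phi)}{\det\Delta^{D_1}(\Phi)}\right)\Big/\left(\frac{\det\Delta^{D}_{\{\gamma\}}(\Phi)}{\det\Delta^{D}(\Phi)}\right),
\]
which is the \emph{reciprocal} of the lemma's right-hand side: no rearrangement of nested fractions turns $\frac{a/b}{c/d}=\frac{ad}{bc}$ into $\frac{bc}{ad}$. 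Equivalently, substituting your formula into the printed right-hand side yields $\mu_\Phi^{D}(\{\gamma\})/\mu_\Phi^{D_1}(\{\gamma\})$, so the printed identity would force the left-hand side to square to $1$, which fails in general. A concrete check: let $D_1$ carry a triangle $\gamma=abc$ and $D$ the triangle plus one extra vertex $d$ joined to $a$ and $b$, with a generic connection; then $\mu^{D_1}_\Phi(\gamma)=1$ while $\mu^{D}_\Phi(\gamma)<1$, so the left-hand side exceeds $1$ while the printed right-hand side is less than $1$. Since your derivation is essentially forced, the correct conclusion is that the lemma as printed has the two determinant ratios inverted (an unsurprising typo in an unproved statement); your write-up should flag this and record the corrected identity, rather than assert that your expression ``is the asserted right-hand side.''
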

\begin{proof}
The proof follows by direct computation using the Forman-Kenyon matrix tree theorem.
\end{proof}

\subsection{Stochastic domination}
Recall from Section~\ref{algorithm} the notation $\mu_{c,\alpha}$ for measures that assign a CRSF $\Gamma$ a probability proportional to $\prod_{e\in\Gamma}c(e)\prod_{\gamma\subset\Gamma}\alpha(\gamma)$, where each $\alpha(\gamma)\in[0,1]$.

\begin{lemma}\label{negcorrelations}
Let $S_1\subset S_2$ be two subgraphs of $\G$. Let $\P_1$ and $\P_2$ be essential CRSF measures $\mu_{c,\alpha}$  
with Dirichlet boundary conditions on $S_1$ and $S_2$, respectively. For any curve $\gamma\in\G\setminus S_2$, we have
$$\P_1(\gamma)\geq \P_2(\gamma)\,.$$
\end{lemma}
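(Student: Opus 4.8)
The plan is to prove the inequality by a \emph{monotone coupling} built from the cycle-popping algorithm $\A$ of Theorem~\ref{algo}, run simultaneously for the two boundary sets with shared randomness. Recall that the essential CRSF with Dirichlet conditions on $S_i$ is sampled by running $\A$ starting from the initial forest $\Gamma=S_i$, and that, as established in the proof of Theorem~\ref{algo}, the law of the output does not depend on the order in which the vertices are explored. I will exploit this order-independence by, in both runs, always exploring first from a fixed vertex $v_0\in\gamma$.

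The structural observation I would use is that, in the procedure $P[\cdot,\cdot]$, a cycle that is kept is created within a single excursion: it is exactly the loop formed at the first self-intersection of the current walk at the moment its Bernoulli keep-coin returns $1$. Since $v_0\in\gamma$, the event that $\gamma$ is a cycle of the output therefore coincides with the event that the very first excursion from $v_0$ closes, and keeps, precisely the loop $\gamma$; indeed, if that excursion instead reaches $S_i$, or keeps a different loop, then $v_0$ ends up on a tree or on another cycle, so $\gamma$ cannot be an output cycle through $v_0$. Crucially, this event is measurable with respect only to the stack cards read, and the keep/pop coins used, during that first excursion.

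Now couple the two runs by using identical stacks $\{X^{(v)}\}$ and identical keep/pop coins, both in the $v_0$-first order. Suppose $\gamma$ is a cycle of the $S_2$-output $\Gamma_2$. On this event the first excursion from $v_0$ never visits a vertex of $S_2\setminus S_1$: such a visit would absorb the walk (the whole of $S_2$ is boundary for that run, while $\gamma\cap S_2=\emptyset$ since $\gamma\subset\G\setminus S_2$), forcing $v_0$ onto a tree and precluding $\gamma$. Hence the entire excursion, including every sub-loop that is popped before $\gamma$ is closed, takes place inside $\G\setminus S_2$, where the $S_1$- and $S_2$-dynamics coincide. Reading the same cards and the same coins in the same order, the $S_1$-run produces the identical excursion and therefore also closes and keeps $\gamma$. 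This gives $\{\gamma\text{ a cycle of }\Gamma_2\}\subseteq\{\gamma\text{ a cycle of }\Gamma_1\}$ under the coupling; since each marginal law is correct by order-independence, taking probabilities yields $\P_1(\gamma)\ge\P_2(\gamma)$.

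The step that I expect to require the most care is the reduction in the second paragraph: legitimately singling out the first excursion from $v_0$ (which rests on order-independence) together with the bookkeeping of the shared coins across the loops popped within that excursion, so that the two coupled runs genuinely read the same coins in the same order; this is exactly where one needs that, on the relevant event, the popped sub-loops also stay in $\G\setminus S_2$. I note that the same conclusion can be reached analytically: the weight-preserving bijection that deletes the cycle gives $\P_S(\gamma)=\alpha(\gamma)\big(\prod_{e\in\gamma}c(e)\big)\,Z_{S\cup V(\gamma)}/Z_S$, reducing the lemma to the monotonicity of $S\mapsto Z_{S\cup V(\gamma)}/Z_S$ in the root set, a Rayleigh-type (Green's-function) monotonicity that is classical in the determinantal case $\alpha\equiv 1$; the coupling above has the advantage of treating general cycle weights $\alpha$ uniformly.
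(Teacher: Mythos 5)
Your proposal is correct and is essentially the paper's own proof: the paper's one-sentence argument invokes the sampling algorithm $\A$ and the observation that a walk started on $\gamma$ is less likely to close (and keep) $\gamma$ before hitting $S_2$ than before hitting $S_1$, which is exactly the monotone coupling you spell out. Your write-up simply makes rigorous what the paper leaves implicit — the reduction to the first excursion from a vertex $v_0\in\gamma$ via order-independence, and the shared stacks/coins bookkeeping — so it matches the paper's approach rather than providing a different one.
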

\begin{proof}
This follows from the cycle-popping algorithm, as follows. We couple the cycle-popping algorithms
for  $\G_1$ and $\G_2$, starting with identical stacks of cards at each vertex.
When a cycle is found for $\G_1$ which is also a cycle for $\G_2$, keep them both or discard them both according to the coin toss,
increasing $S_1,S_2$ appropriately.
If a cycle is found for $\G_1$ which is not a cycle of $\G_2$ (that is, defines a LERW connected to the current $S_2$,
keep that LERW in $\G_2$ (and, in addition, every part of the cycle which connects to the current boundary $S_2$) and toss a coin to determine whether or not to keep the cycle in $\G_1$. For either outcome of the coin toss it
is still true that $S_2$, the current union of boundaries of $\G_2$, contains $S_1$, the current union of boundaries of $\G_1$. 
Continue until cycle $\gamma$ is found for $\G_1$; at that point it will be kept in $\G_2$ if and only if it does not intersect the current $S_2$.
\end{proof}

Note that in the case the measures come from a line bundle connection $\Phi$, this implies
$$\frac{\det \Delta_{S_1\cup\gamma}(\Phi)}{\det \Delta_{S_1}(\Phi)}\geq\frac{\det \Delta_{S_2\cup\gamma}(\Phi)}{\det \Delta_{S_2}(\Phi)}\,,$$
which is a non trivial potential theoretic consideration (which can be translated in terms of Dirichlet-to-Neumann map).

\section{Questions}\label{questions}

\begin{enumerate}

\item
Can the measures $\P_{nonc}$, $\P_{LC^0}$, and $\P_{LC}$ be defined directly instead of via limits of CRSF measures?
For example via a stochastic differential equation, like a variant of $\mathrm{SLE}_2$ defined on Riemann surfaces subject to some potential depending on the metric? 

\item
On the round sphere for the measure $\mu_{LC^0}$, can one use the connection Laplacian to
say more about the shape of the cycle, as is done in \cite{KKW} in the flat case?

\item What is the function $x\mapsto\P_{LC^0}\left(x \,\text{is enclosed by the loop}\right)$ for the unit disk?

\item
What can be said about the Gaussian Free Field associated to the line bundle Laplacian? Are our 
loop models related to this GFF? In particular, for a choice of an infinite curvature, we expect to obtain loops at all scales. 

\item
What is the right framework for the study of $\P_{LC}$? What is the distribution of the number of loops?

\item 
Is there a probabilistic interpretation for the coefficients of the triangular matrix $\mathbf{A}$ defined in Section~\ref{integrals}?

\item 
What is the scaling limit of CRSFs in higher dimension?

\item 
What can be said about the scaling limit of measures $\mu_\alpha$ on CRSFs for weight functions $\alpha$ not coming from a connection?

\item Can our loop measures be used to study the scaling limit of waves of avalanches in the sandpile model in the scaling limit? See \cite{IKP} for a definition of waves in this model.

\end{enumerate}

\end{document}